\title[Minimal Mahler Measure in Real Quadratic Fields]{Minimal Mahler Measure in Real Quadratic Fields}
\author[Cochrane, Dissanayake, Donohoue, Ishak, Pigno, Pinner \&  Spencer]{Todd Cochrane, R. M. S. Dissanayake, Nicholas Donohoue,  M. I. M. Ishak, Vincent Pigno, Chris  Pinner, \and Craig Spencer}
\address{ Department of Mathematics\\
         Kansas State University\\
         Manhattan, KS 66506}
\email{cochrane@math.ksu.edu, donohoue@ksu.edu, pinner@math.ksu.edu \& cvs@math.ksu.edu}
\address{Department of Engineering Mathematics\\
Faculty of Engineering\\ University of Peradeniya\\  Sri Lanka}
\email{sumanad@pdn.ac.lk \& mimishak@pdn.ac.lk}
\address{ Department of Mathematics \& Statistics\\
University of California, Sacramento\\
Sacramento, CA 95819}
\email{vincent.pigno@csus.edu}
\thanks{N. Donohoue  and V. Pigno were supported in part by an I-Center
Undergraduate Scholarship and an I-Center Graduate Scholarship,
respectively, and C. Spencer was supported in part by NSA Young
Investigator Grant \#H98230-14-1-0164.}
\keywords{Mahler Measure} \subjclass[2010]{Primary: 11R06; Secondary: 11C08, 11Y40}
\date{\today}
\newcommand{\be}{\begin{equation}}
\newcommand{\ee}{\end{equation}}
\newcommand{\ba}{\begin{align}}
\newcommand{\ea}{\end{align}}
\newtheorem{theorem}{Theorem}[section]
\newtheorem{corollary}{Corollary}[section]
\newtheorem{lemma}{Lemma}[section]
\newtheorem{conjecture}{Conjecture}[section]
\begin{document}
\begin{abstract}
We consider upper and lower bounds on the minimal height 
of an irrational number lying in a particular real quadratic field.
\end{abstract}

\maketitle

\section {Introduction }
 
For a polynomial $F(x)=a_n \prod_{i=1}^n (x-\alpha_i)$ in $\mathbb C [x]$ one defines its
Mahler measure $M(F)$  as 
$$ M(F) =|a_n| \prod_{i=1}^n \max\{ 1,|\alpha_i|\}. $$
For an algebraic number $\alpha$ we use  $M(\alpha)$ to denote the Mahler measure of an irreducible integer polynomial with root $\alpha$. Thus the logarithmic Weil height of $\alpha$
can be written 
$$ h(\alpha) = \frac{\log M(\alpha)}{[\mathbb Q (\alpha) : \mathbb Q]}. $$
Of course $M(\alpha)=1$ iff $\alpha$ is a root of unity and the well known problem of Lehmer
\cite{lehmer}  is to determine whether there is a constant $C>1$  such that $M(\alpha)>C$ otherwise.
Schinzel \cite{Schinzel} showed that for  $\alpha$ in a Kroneckerian field (a totally real field or a quadratic extension
of such a field) the value of $M(\alpha)$ must in fact grow with its degree, with the absolute minimum $M(\alpha)>1$ achieved for the golden ratio
$$ M\left( \frac{1+\sqrt{5}}{2}\right) =\frac{1+\sqrt{5}}{2} . $$
Amoroso \& Dvornicich \cite{amoroso}  further extended this to cyclotomic fields. These of course include the quadratic fields $\mathbb Q(\sqrt{d})$, where $d$ is a square-free positive integer. 
Since the golden ratio is not in all these fields we are interested in how 
$$ L(d) := \min \left\{ M(\alpha)\; :\; \alpha \in \mathbb Q (\sqrt{d})\setminus \mathbb Q\right\}$$
varies with $d$.
We recall the discriminant of the field $\mathbb Q (\sqrt{d})$
$$ D:= \begin{cases}  d, & \text{ if $d\equiv 1$ mod 4,}\\
 4d, & \text{if $d\equiv 2$ or 3 mod 4.} \end{cases} $$
Since $M(\alpha)=M(-\alpha)=M(\overline{\alpha})$  we assume that our $\alpha \in  \mathbb Q (\sqrt{d})\setminus \mathbb Q$ takes the form
\be \label{defal} \alpha = \frac{a+b\sqrt{d}}{c}, \;\; a,b,c\in \mathbb Z,\; a\geq 0,\, b>0,\,c>0,\;  \gcd(a,b,c)=1,\ee
with conjugate
 $$\overline{\alpha} = \frac{a-b\sqrt{d}}{c}, $$ 
and
\be  \label{MM} M(\alpha) =k \max\{ 1,|\alpha|\}\max\{ 1,|\overline{\alpha}|\}  \ee
where $k$ is the smallest positive integer such that 
\be \label{defk}
 k(x-\alpha)(x-\overline{\alpha})=k\left( x^2 -\frac{2a}{c} x + \frac{a^2-b^2d}{c^2}\right) \in \mathbb Z[x]. 
\ee
We show that the minimal measure must grow with $d$:

\begin{theorem}\label{main}  For square-free $d$ in $\mathbb N$
$$ \frac{1}{2}\sqrt{D}  <  L(d)   < \sqrt{D}. $$
\end{theorem}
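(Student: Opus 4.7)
My plan is to prove the upper bound by exhibiting an explicit element in each residue class of $d\bmod 4$, and the lower bound by extracting two integrality constraints on the data $k,a,b,c$ from \eqref{defal}--\eqref{defk}.

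\textbf{Upper bound.} When $d\equiv 2$ or $3\pmod 4$ I will take $\alpha=\lfloor\sqrt d\rfloor+\sqrt d\in\mathbb Z[\sqrt d]$, so $k=1$ and $|\overline{\alpha}|=\sqrt d-\lfloor\sqrt d\rfloor<1$, giving $M(\alpha)=\lfloor\sqrt d\rfloor+\sqrt d<2\sqrt d=\sqrt D$. When $d\equiv 1\pmod 4$ I will take $\alpha=(m+\sqrt d)/2$ where $m$ is the largest odd integer with $m\le\sqrt d$; then $m^2\equiv d\pmod 4$ shows $\alpha\in\mathbb Z[(1+\sqrt d)/2]$ with $k=1$, and $\sqrt d-m<2$ gives $|\overline{\alpha}|<1$, so $M(\alpha)=(m+\sqrt d)/2<\sqrt d=\sqrt D$.

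\textbf{Lower bound.} For arbitrary $\alpha$ in the form \eqref{defal} I will establish two facts: $M(\alpha)>kb\sqrt d/c$ and $2kb/c\in\mathbb Z_{>0}$.

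\emph{First fact.} From $|\alpha-\overline{\alpha}|\le|\alpha|+|\overline{\alpha}|\le 2\max\{1,|\alpha|\}\max\{1,|\overline{\alpha}|\}$ and \eqref{MM} I get $M(\alpha)\ge k|\alpha-\overline{\alpha}|/2=kb\sqrt d/c$. Since $\alpha$ is irrational, $|\alpha|\ne 1\ne|\overline{\alpha}|$, and a short case analysis according to whether each of $|\alpha|,|\overline{\alpha}|$ exceeds $1$ promotes the inequality to a strict one.

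\emph{Second fact.} The polynomial $F$ of \eqref{defk} has integer discriminant $4k^2b^2d/c^2=(2kb/c)^2d$; writing $2kb/c=p/q$ in lowest terms forces $q^2\mid d$, and squarefreeness of $d$ yields $q=1$, so $n:=2kb/c$ is a positive integer.

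Combining, $M(\alpha)>n\sqrt d/2$. For $d\equiv 1\pmod 4$ this gives $M(\alpha)>\sqrt d/2=\sqrt D/2$ at once. For $d\equiv 2,3\pmod 4$ I will rule out $n=1$: suppose $c=2kb$; then $\gcd(a,b,c)=1$ forces $\gcd(a,b)=1$, $B=-a/b\in\mathbb Z$ forces $b=1$ and $c=2k$, and then $C=(a^2-d)/(4k)\in\mathbb Z$ requires $4\mid a^2-d$, which fails for every parity of $a$ when $d\equiv 2,3\pmod 4$. Hence $n\ge 2$ and $M(\alpha)>\sqrt d=\sqrt D/2$.

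\textbf{Main obstacle.} The hardest step will be the mod~$4$ computation ruling out $n=1$ when $d\equiv 2,3\pmod 4$; the remaining work is a discriminant calculation together with a triangle-inequality case analysis.
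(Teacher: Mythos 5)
Your proof is correct, and while your upper bound is essentially the paper's construction (identical for $d\equiv 1\bmod 4$; for $d\equiv 2,3\bmod 4$ you use the algebraic integer $\lfloor\sqrt d\rfloor+\sqrt d$ with $k=1$ in place of the paper's $(m+\sqrt d)/2$ with $k=2$, which changes nothing), your lower bound runs on a genuinely different engine. The paper proves a local divisibility lemma: for each prime power $p^t\mid\mid c$ it examines when $2ka/c$ and $k(a^2-db^2)/c^2$ can both be integers, concluding $c\mid k$, or $c/2\mid k$ when $c$ is even and $d\equiv 1\bmod 4$, and feeds this into the same height inequality $M(\alpha)>kb\sqrt{d}/c$. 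You instead observe that the discriminant $(2kb/c)^2d$ of the integral polynomial $k(x-\alpha)(x-\overline{\alpha})$ is an integer, so squarefreeness of $d$ forces $n=2kb/c\in\mathbb Z_{>0}$ in one stroke, and you then eliminate $n=1$ for $d\equiv 2,3\bmod 4$ via the obstruction $4\nmid a^2-d$. This is shorter and invokes squarefreeness globally rather than prime by prime; the trade-off is that you obtain only $c\mid 2kb$, whereas the paper's lemma also pins down exactly when $k=c$ (namely $c\mid a^2-db^2$) or $k=c/2$, information it reuses in Section 3 to reduce $L(d)$ to a finite search, so your argument cleanly replaces the proof of Theorem \ref{main} but not the later refinements. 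Two minor points to tidy: your deduction $\gcd(a,b)=1$ uses $b\mid c=2kb$ (it does not follow from $\gcd(a,b,c)=1$ alone, so say so), and for $d\equiv 1\bmod 4$ you should check $\alpha=(m+\sqrt d)/2>1$ (true since $d\ge 5$) before asserting $M(\alpha)=(m+\sqrt d)/2$.
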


\section {Proof of  Theorem \ref{main}}

The upper bound follows at once from the following constructive examples:

\begin{lemma} \label{ex}
Suppose that $d\geq 2$ is a square-free positive integer and let $m$ be the integer in $(\sqrt{d}-2,\sqrt{d})$ with the same parity as $d$. Then
$$ M\left(    \frac{m+\sqrt{d}}{2} \right) = \begin{cases} 2, & \text{if $d=2$,}\\
\frac{1}{2}(\sqrt{d}+m), & \text{ if $d\equiv 1$ mod 4,} \\
\sqrt{d}+m, & \text{ otherwise.} \end{cases}$$
\end{lemma}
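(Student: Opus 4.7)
The plan is to directly compute $M(\alpha)$ from the formulas (\ref{MM})--(\ref{defk}) by determining the three ingredients: the primitive denominator $k$, the sizes $|\alpha|$ and $|\overline{\alpha}|$, and then multiplying them together. For $\alpha=(m+\sqrt{d})/2$ one has
\[
(x-\alpha)(x-\overline{\alpha})=x^2-mx+\frac{m^2-d}{4},
\]
so determining $k$ reduces to asking when $(m^2-d)/4\in\mathbb Z$. Here the parity hypothesis on $m$ enters: if $d\equiv1\pmod 4$ then $m$ is odd, so $m^2-d\equiv0\pmod4$ and $k=1$; if $d\equiv2$ or $3\pmod 4$ then $m^2-d\equiv2\pmod4$, so $(m^2-d)/4\notin\mathbb Z$ but $(m^2-d)/2\in\mathbb Z$, giving $k=2$.

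Next I would estimate $\alpha$ and $\overline{\alpha}$ using the defining interval $m\in(\sqrt{d}-2,\sqrt{d})$. Immediately $\overline{\alpha}=(m-\sqrt{d})/2\in(-1,0)$, so $\max\{1,|\overline{\alpha}|\}=1$ in every case. For $\alpha$ I would split on $d=2$ versus $d\geq3$: when $d\geq3$ the allowed parity forces $m\geq1$, and since $m<\sqrt{d}$ one gets $\alpha=(m+\sqrt{d})/2>m\geq1$, whence $\max\{1,|\alpha|\}=\alpha$. When $d=2$ the interval forces $m=0$, giving $\alpha=\sqrt{2}/2<1$, so $\max\{1,|\alpha|\}=1$.

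Combining, for $d\geq3$ I get $M(\alpha)=k\alpha$, which is $(m+\sqrt{d})/2$ when $d\equiv1\pmod4$ and $m+\sqrt{d}$ when $d\equiv2,3\pmod4$; for $d=2$ I get $M(\alpha)=k\cdot1\cdot1=2$, matching the stated formula.

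The only point requiring any care is verifying that $m\geq1$ for all $d\geq3$ of the relevant parity—i.e., that the interval $(\sqrt{d}-2,\sqrt{d})$ always contains a positive integer of the required parity. This is the step most likely to need a short case check (small $d$ such as $d=3,5,6,7$ handled by inspection, larger $d$ covered because the interval has length $2$ and hence always contains an integer of each parity). Everything else is immediate arithmetic.
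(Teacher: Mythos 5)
Your proposal is correct and follows essentially the same route as the paper's proof: identify $k$ from the parity of $m^2-d$ modulo $4$, note $-1<\overline{\alpha}<0$, and observe $\alpha>1$ for $d\geq 3$ while $0<\alpha<1$ for $d=2$. You simply spell out the parity computation and the check that $m\geq 1$, which the paper leaves as ``plainly'' and ``observe.''
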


\begin{proof} 

Observe that $\alpha=(m+\sqrt{d})/2$ has $-1<\overline{\alpha} <0$, with  $\alpha >1$ 
for $d\geq 3$  (and $0< \alpha <1$ for $d=2$). 
The minimal $k$ to make
$k(x-\alpha)(x-\overline{\alpha})=k\left( x^2 -mx + \frac{1}{4}(m^2-d)\right)$
an integer polynomial is plainly $k=1$ if $d\equiv 1$ mod 4 and $k=2$
for $d=2$ or 3 mod 4, and the claim  is clear from \eqref{MM}.

\end{proof}

For the lower bound we first observe that $c$ or $c/2$ must divide the lead coefficient $k$.

\begin{lemma} Suppose that $d\geq 2$ is squarefree and $\alpha$ is of the form \eqref{defal}.
Suppose that  $k(x-\alpha)(x-\overline{\alpha})$ is in $\mathbb Z[x]$. 

 If $c$ is even and $d\equiv 1$ mod 4  then  $c/2 \mid k$   with $k=c/2$
iff $a,b$ are odd with $2c\mid a^2-db^2$. If $c$ is odd or $d\equiv 2$ or 3 mod 4 then $c\mid k$
with $k=c$ iff $c\mid a^2-db^2$. 
\end{lemma}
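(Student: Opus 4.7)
The plan is as follows. First observe that $k(x-\alpha)(x-\overline{\alpha})$ lies in $\mathbb{Z}[x]$ if and only if the pair of divisibility conditions
$$c \mid 2ka \qquad \text{and} \qquad c^2 \mid k(a^2-db^2)$$
both hold. I will analyze these prime by prime, exploiting $\gcd(a,b,c)=1$ together with the squarefreeness of $d$.

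For an odd prime $p$ with $p^j \| c$, there are two cases. If $p \nmid a$, the first condition delivers $p^j \mid k$ immediately. If instead $p \mid a$, then $\gcd(a,b,c)=1$ forces $p \nmid b$, and squarefreeness of $d$ gives $v_p(a^2-db^2) \leq 1$; then $c^2 \mid k(a^2-db^2)$ yields $p^{2j-1}\mid k$, which is at least $p^j$ for $j\ge 1$. Hence the odd part of $c$ always divides $k$, independent of which case of the lemma we are in.

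The delicate step is $p=2$. When $c$ is odd the case is vacuous. When $c$ is even with $d\equiv 2$ or $3\pmod 4$, a short case check on the parities of $a$ and $b$ (noting they cannot both be even, by the gcd hypothesis) shows that $a^2-db^2$ has $v_2\le 1$ in every admissible sub-case, and combining with the two divisibilities forces the full $2^j\mid k$. The subtle case is $c$ even with $d\equiv 1\pmod 4$: if exactly one of $a,b$ is even then $a^2-db^2$ is odd, so the constant-term condition forces $2^{2j}\mid k$ with no loss; but if both $a$ and $b$ are odd then $a^2-db^2\equiv 1-d\equiv 0\pmod 4$, and $c^2\mid k(a^2-db^2)$ only forces $2^{2j-2}\mid k$, i.e.\ $2^{j-1}\mid k$. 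This is exactly where the factor of $2$ loss in the claim comes from, and I expect it to be the main obstacle to writing the argument cleanly.

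The ``iff'' assertions then follow by substituting $k=c$ (respectively $k=c/2$) into the two divisibilities: the linear coefficient becomes automatically integral, and the constant-term condition reduces to $c\mid a^2-db^2$ (respectively $2c\mid a^2-db^2$). The condition ``$a,b$ both odd'' in the second assertion is not an extra hypothesis but a consequence of $2c\mid a^2-db^2$ together with $c$ even and $d\equiv 1\pmod 4$, since in any other parity combination one computes $a^2-db^2\equiv \pm 1\pmod 4$, contradicting divisibility by $4$.
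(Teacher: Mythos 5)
Your overall strategy---reducing integrality to the two conditions $c \mid 2ka$ and $c^2 \mid k(a^2-db^2)$ and analysing them prime by prime---is the same as the paper's, and your treatment of the odd primes, of $p=2$ when $d\equiv 2$ or $3\pmod 4$, and of the two ``iff'' assertions is correct. But there is a genuine error in exactly the sub-case you flag as delicate: $c$ even, $d\equiv 1\pmod 4$, $a,b$ both odd. You assert that $a^2-db^2\equiv 0\pmod 4$ and that the constant-term condition ``only forces $2^{2j-2}\mid k$, i.e.\ $2^{j-1}\mid k$.'' First, $2^{2j-2}\mid k$ and $2^{j-1}\mid k$ are different statements once $j\geq 2$. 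More seriously, the constant term gives $v_2(k)\geq 2j-v_2(a^2-db^2)$, and $v_2(a^2-db^2)$ need not equal $2$: for odd $a,b$ one has $a^2-db^2\equiv 1-d\pmod 8$, so whenever $d\equiv 1\pmod 8$ the valuation is at least $3$ and the constant-term condition is strictly weaker than $2^{j-1}\mid k$, possibly vacuous. Concretely, for $d=17$, $a=b=1$, $c=4$ (so $j=2$) the polynomial $2x^2-x-2$ is integral, so $k=2$ is admissible; your claimed bound $2^{2j-2}=4\mid k$ is false, and the constant term imposes no condition at all since $c^2\mid a^2-db^2$. As written, the needed conclusion $2^{j-1}\mid k$ is not established in this sub-case.

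The bound must instead come from the \emph{linear} coefficient, which you never invoke at $p=2$: since $4\mid a^2-db^2$ with $d\equiv 1\pmod 4$ forces $a$ odd, the condition $c\mid 2ka$ gives $2^{j}\mid 2k$, hence $2^{j-1}\mid k$. This is precisely the paper's route: it first disposes of the case $2^{j+1}\nmid a^2-db^2$, where the constant term already yields the full $2^{j}\mid k$, and in the remaining case deduces $2\nmid a$ and falls back on $2ka/c\in\mathbb Z$. With that one repair your argument coincides with the paper's; without it, the key sub-case is unproved.
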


\begin{proof} Suppose that $p^t \mid\mid c$ with $t\geq 1$.

 For
$k(a^2-db^2)/c^2$ to be  in $\mathbb Z$ we must have $p^{t+1}\mid k$ unless $p^t\mid a^2-db^2$
and  $p^t\mid k$ unless  $p^{t+1}\mid a^2-db^2$. 

Hence we can assume that $p^{t+1} \mid a^2-db^2$.  Notice that in this case $p\nmid a$; since $p\mid a$
and $p^2\mid a^2-db^2$
would imply $p^2\mid db^2$, but $\gcd(a,b,c)=1$ means $p\nmid b$ and $d$ is squarefree.
In particular this case can not happen when $p=2$ and $d\equiv 2$ or 3 mod 4 (since $a^2-db^2\not \equiv 0$ mod 4),  and $a,b$ must be odd if $d\equiv 1$ mod 4.  Hence $2ka /c$ in $\mathbb Z$ forces $p^t\mid k$  when $p$ is odd and $2^{t-1}\mid k$ when $p=2$ and $d\equiv 1$ mod 4.

\end{proof}

The following lemma completes the proof of the lower bound:

\begin{lemma}
 Suppose that $d\geq 2$ is squarefree and $\alpha$ is of the form \eqref{defal}. Then
$$ M(\alpha) > \frac{1}{2} \sqrt{D}. $$
Moreover 
$$M(\alpha)>\sqrt{D} $$ 
unless  $b=1$ and $a<\sqrt{d}$, with $c\mid a^2-d$ if $d\equiv 2$ or 3 mod 4
and with $c$ even and $2c\mid a^2-d$  if $d\equiv 1$ mod 4.
\end{lemma}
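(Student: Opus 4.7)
The plan is to establish the single inequality $M(\alpha)>kb\sqrt{d}/c$ and then use the previous lemma to bound $k$ from below. For the displayed inequality, I would note that for $u,v\geq 0$ one has the elementary bound $\max\{1,u\}\max\{1,v\}\geq(u+v)/2$ (easy to verify in the three cases determined by whether $u$ and $v$ exceed $1$), and then combine it with the triangle inequality $|\alpha|+|\overline{\alpha}|\geq|\alpha-\overline{\alpha}|=2b\sqrt{d}/c$. The inequality is strict because $|\alpha|=1$ is impossible (as $\alpha$ is irrational) and $|\overline{\alpha}|>0$ (as $d$ is not a square), so the triangle step cannot be an equality.

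The previous lemma gives $k\geq c$ except when $d\equiv 1\pmod 4$ and $c$ is even, in which case $k\geq c/2$. Feeding this into the displayed bound yields $M(\alpha)>b\sqrt{d}/2$ in every instance, and since $\sqrt{D}$ equals either $\sqrt{d}$ or $2\sqrt{d}$ and $b\geq 1$, this already gives $M(\alpha)>\sqrt{D}/2$, proving the first inequality.

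For the sharper bound $M(\alpha)>\sqrt{D}$ outside the listed exceptional configurations, I would proceed by a short case analysis. When $b\geq 2$, the displayed bound picks up the needed extra factor of $2$ automatically. When $b=1$ but the divisibility condition in the previous lemma fails, the minimum $k$ must jump to the next multiple of $c$ (or $c/2$), namely $2c$ (or $c$), and then the displayed bound again gives $M(\alpha)>\sqrt{D}$. The remaining case is $b=1$ with divisibility holding and $a>\sqrt{d}$ (note $a=\sqrt{d}$ is impossible); here $\alpha,\overline{\alpha}>0$, and I would split on the locations of $\alpha,\overline{\alpha}$ relative to $1$. If $\overline{\alpha}\geq 1$ then $a-\sqrt{d}\geq c$, giving $a^2-d\geq c(a+\sqrt{d})>2c\sqrt{d}$ and hence a bound from $M=k\alpha\overline{\alpha}=k(a^2-d)/c^2$; if $\overline{\alpha}<1\leq\alpha$ then $M=k\alpha\geq k(a+\sqrt{d})/c>2k\sqrt{d}/c$; and if $\alpha<1$ then $c>a+\sqrt{d}>2\sqrt{d}$ bounds $M\geq k$. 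In each sub-case, combined with $k\geq c$ or $k\geq c/2$ as appropriate, I obtain $M(\alpha)>\sqrt{D}$.

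The main obstacle is purely bookkeeping: keeping track of the two residue classes of $d$ modulo $4$, the parity of $c$, the two possible minimal values of $k$, and the three possible locations of $|\alpha|,|\overline{\alpha}|$ relative to $1$, and matching each configuration to the correct lower bound on $k$. The individual calculations in each case are routine.
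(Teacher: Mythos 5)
Your proposal is correct and follows essentially the same route as the paper: both arguments derive the key inequality $M(\alpha)>kb\sqrt{d}/c$ from the root separation $\alpha-\overline{\alpha}=2b\sqrt{d}/c$ (the paper via $M\geq k\alpha>kb\sqrt{d}/c$, you via the equivalent bound $\max\{1,u\}\max\{1,v\}\geq(u+v)/2$), then feed in the lower bounds on $k$ from the preceding lemma and dispose of the non-exceptional configurations by the same three observations ($b\geq 2$, failure of the divisibility condition forcing $k$ to double, and $a>\sqrt{d}$). Your three-way sub-case split for $a>\sqrt{d}$ is sound but more elaborate than needed, since $M\geq k\max\{1,\alpha\}\geq k\alpha>2kb\sqrt{d}/c\geq\sqrt{D}$ handles all three sub-cases at once.
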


\begin{proof}  Observing that 
$$ 2\frac{b\sqrt{d}}{c}  = \alpha -\overline{\alpha} \leq \alpha + |\overline{\alpha}| <  2\alpha, $$
we have 
$$ M(\alpha) \geq k \alpha  > k\frac{b\sqrt{d}}{c},$$
and the bound follows from $k\geq c$  if $c$ is odd or $d\equiv 2$ or 3 mod 4 
(with $k\geq 2c$ if $c\nmid a^2-db^2$),  and  $k\geq c/2$ if $c$ is even and $d\equiv 1$ mod 4
(with $k\geq c$ if $2c\nmid a^2-db^2$). If $a\geq \sqrt{d}$ then $M(\alpha) \geq k\alpha >\sqrt{D}$.

\end{proof}

\section{Computations}

Hence $\frac{1}{2}\sqrt{D} < L(d) <\sqrt{D}$, and an $\alpha$ of the form \eqref{defal}
with  $\frac{1}{2}\sqrt{D} < M(\alpha) <\sqrt{D}$  must be of the form 
$$ \alpha = \frac{a+\sqrt{d}}{c}, \; a<\sqrt{d},$$
with  $c\mid a^2-d$  if $d\equiv 2$ or 3 mod 4, and $c$  even with $2c\mid a^2-d$  if $d\equiv 1$ mod 4. Since $|\overline{\alpha}|\leq \alpha$  we have  $M(\alpha)=k \max\{ 1, \alpha, \alpha |\overline{\alpha}|\}$,  and in these cases we have
\be \label{smallmeas} M(\alpha)=\varepsilon \max\left\{ c, a+\sqrt{d},\frac{d-a^2}{c}\right\},\ee
where
\be \label{defve} \;\; \varepsilon:=\begin{cases} 1, & \text{ if $d\equiv 2$ or 3 mod 4,} \\ \frac{1}{2}, & \text{ if $d\equiv 1$ mod 4.}\end{cases} \ee

\begin{figure}[!h]
\centering
\includegraphics[scale=.56]{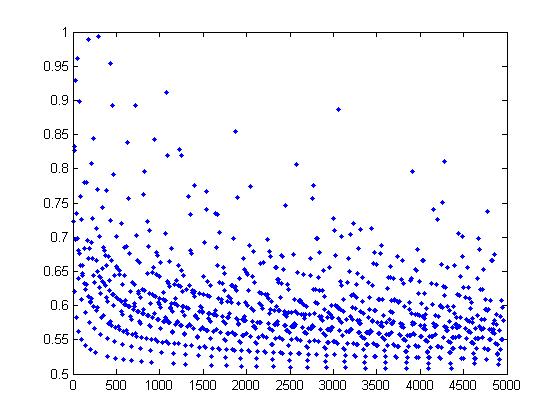}
\caption{$L(d)/\sqrt{D}$ for $d \equiv 1$ mod $4$ less than five thousand.  }
\label{fig:onemod4s}
\end{figure}
\begin{figure}[!h]
\centering
\includegraphics[scale=.56]{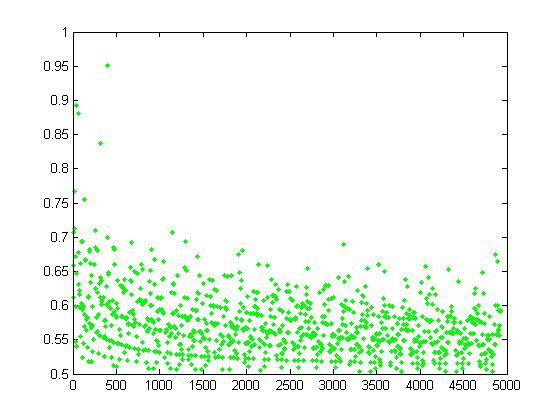}
\caption{$L(d)/\sqrt{D}$ for $d \equiv 2$ mod $4$ less than five thousand.  }
\label{fig:twomod4s}
\end{figure}
\vspace{2cm}
\begin{figure}[!h]
\centering
\includegraphics[scale=.56]{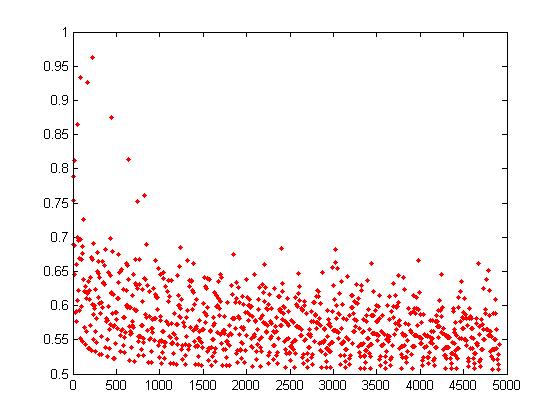}
\caption{$L(d)/\sqrt{D}$ for $d \equiv 3$ mod $4$ less than five thousand.  }
\label{fig:threemod4s}
\end{figure}
\begin{figure}[!h]
\centering
\includegraphics[scale=.56]{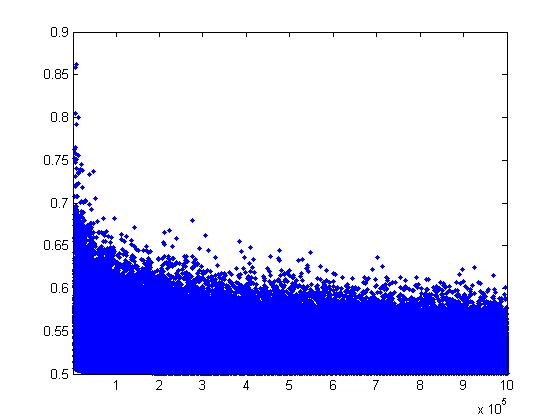}
\caption{$L(d)/\sqrt{D}$ for $d \equiv 1$ mod $4$ between five thousand and one million.  }
\label{fig:onemod4m}
\end{figure}
\begin{figure}[!h]
\centering
\includegraphics[scale=.56]{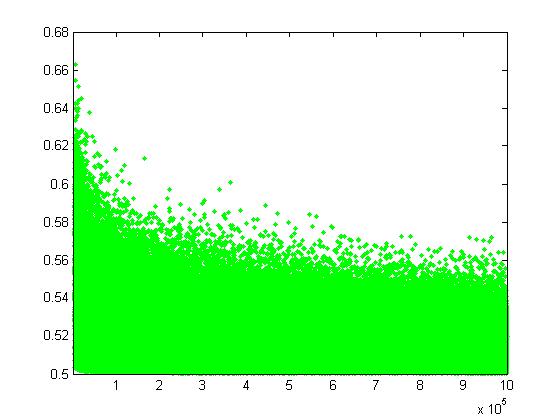}
\caption{$L(d)/\sqrt{D}$ for $d \equiv 2$ mod $4$ between five thousand and one million.  }
\label{fig:twomod4m}
\end{figure}
\begin{figure}[!h]
\centering
\includegraphics[scale=.56]{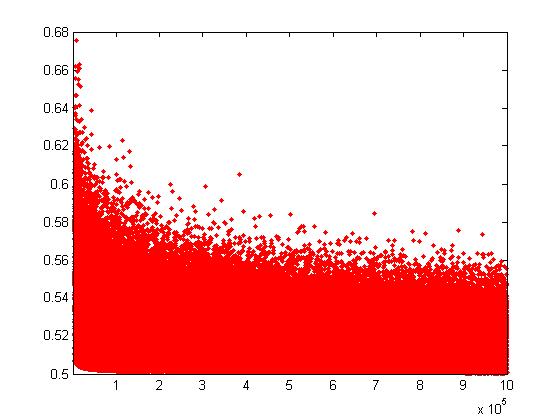}
\caption{$L(d)/\sqrt{D}$ for $d \equiv 3$ mod $4$ between five thousand and one million.  }
\label{fig:threemod4m}
\end{figure}
\clearpage  
\begin{figure}[!h]
\centering
\includegraphics[scale=.56]{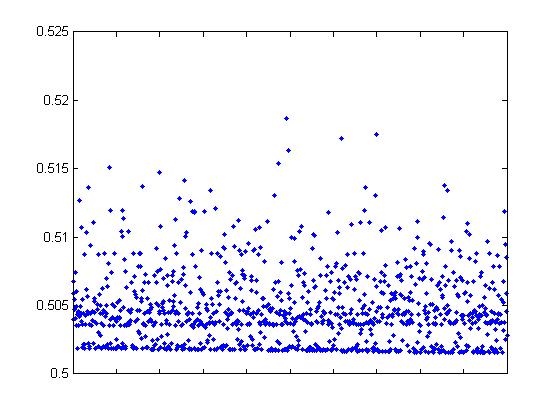}
\caption{$L(d)/\sqrt{D}$ for $d$ between one billion and one billion five thousand.  }
\label{fig:onemod4h}
\end{figure}
\begin{figure}[!h]
\centering
\includegraphics[scale=.56]{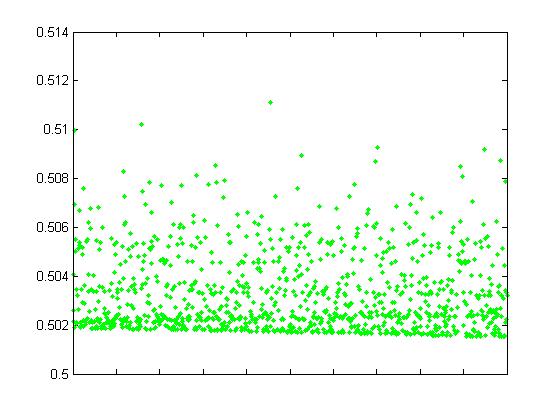}
\caption{$L(d)/\sqrt{D}$ for $d\equiv 2$ mod 4  between one billion and one billion five thousand.  }
\label{fig:twomod4h}
\end{figure}
\begin{figure}[!h]
\centering
\includegraphics[scale=.56]{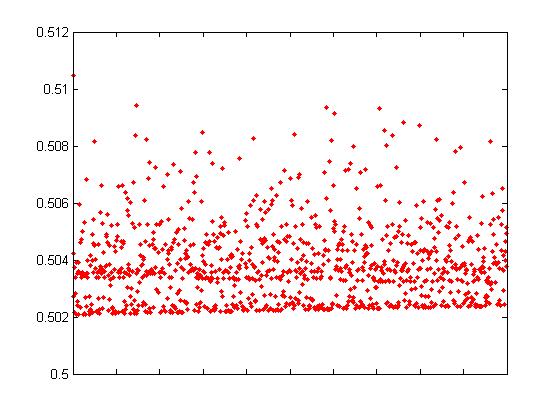}
\caption{$L(d)/\sqrt{D}$ for $d\equiv 3$ mod 4  between one billion and one billion five thousand.  }
\label{fig:threemod4h}
\end{figure}

\section{How good are our bounds?}
Theorem \ref{main} tells us  that
$$ \frac{1}{2} < \frac{L(d)}{\sqrt{D}} < 1. $$
Figures \ref{fig:onemod4h},\ref{fig:twomod4h}  \& \ref{fig:threemod4h} make it seem  reasonable to make the following conjecture:

\begin{conjecture}\label{limit}
$$ \lim_{d\rightarrow \infty}  \frac{L(d)}{\sqrt{D}}\ = \frac{1}{2}.$$
\end{conjecture}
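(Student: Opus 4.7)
The plan is to reduce the conjecture, via \eqref{smallmeas} and \eqref{defve}, to the following constructive problem: for every $\epsilon > 0$ and all sufficiently large square-free $d$, produce integers $a \geq 0$ and $c \geq 1$ with $c \mid d - a^2$ (and $c$ even with $2c \mid d - a^2$ if $d \equiv 1 \pmod 4$) satisfying
$$ a < \epsilon \sqrt{d}, \qquad c \in [(1 - \epsilon)\sqrt{d},\,(1+\epsilon)\sqrt{d}]. $$
Then $(d-a^2)/c = (1+O(\epsilon))\sqrt{d}$ automatically, and \eqref{smallmeas} yields $M(\alpha) \leq (\tfrac{1}{2} + O(\epsilon))\sqrt{D}$; combined with Theorem \ref{main} this gives the conjecture. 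The even-$c$ requirement in the $d \equiv 1 \pmod 4$ case costs at most a constant factor and is a routine adjustment.

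My first approach would use the continued fraction expansion of $\sqrt{d}$ (or, to cover non-principal ideal classes, of reduced representatives in every class). The complete quotients $(P_n + \sqrt{d})/Q_n$ automatically satisfy $P_n^2 \equiv d \pmod{Q_n}$ with $0 \leq P_n < \sqrt{d}$ and $Q_n \leq 2\sqrt{d}$, so $M \leq \sqrt{D}$. Reduced quotients moreover satisfy $\sqrt{d} - P_n < Q_n < \sqrt{d} + P_n$, so any visit to a form with $P_n = o(\sqrt{d})$ forces $Q_n = (1+o(1))\sqrt{d}$, which is exactly the needed configuration. The problem then becomes: show that the cycle of reduced forms always contains a small-$P$ representative. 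I would attempt this by a volumetric/combinatorial analysis of the Gauss--Lagrange recursion $P_{n+1} = a_n Q_n - P_n$, $Q_{n+1} = (d - P_{n+1}^2)/Q_n$, exploiting the length of the period (or, aggregated across all classes, the product of the class number and the regulator).

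A complementary, more analytic route would fix $\epsilon > 0$, range $a$ over $\{0, 1, \dots, \lfloor \epsilon\sqrt{d} \rfloor\}$, and seek any divisor of $d - a^2$ in $[(1-\epsilon)\sqrt{d}, (1+\epsilon)\sqrt{d}]$. The input would be a lower bound, via sieve or second-moment methods, on the number of such shifts admitting a divisor in this target window, perhaps through refinements of Ford's theorem on integers with prescribed divisor ranges.

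The principal obstacle is Erd\H os's multiplication-table phenomenon: a positive proportion of integers $n$ possess no divisor in $[(1-\epsilon)\sqrt{n}, (1+\epsilon)\sqrt{n}]$, so a pointwise argument for each $a$ must fail, and one must exploit either the joint variation of the $\sim \epsilon\sqrt{d}$ shifts or the arithmetic of $\mathbb{Q}(\sqrt{d})$ (its class group, fundamental unit, or the fine structure of the CF period) in a genuine way. Producing small solutions of $a^2 \equiv d \pmod{c}$ with $c$ in a short interval about $\sqrt{d}$ is notoriously delicate; I anticipate that a fully unconditional proof will require substantial new input, and would view proving the conjecture for $d$ in a density-one set, or conditionally under GRH, as natural intermediate targets.
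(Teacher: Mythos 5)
The statement you were asked to prove is presented in the paper as a \emph{conjecture}, not a theorem: the authors give no proof, only computational evidence (their Figures \ref{fig:onemod4h}--\ref{fig:threemod4h}) and partial results. Your opening reduction --- that the limit would follow from producing, for every large square-free $d$, integers $a=o(\sqrt{d})$ and $c=(1+o(1))\sqrt{d}$ with $c\mid d-a^2$ (and $c$ even with $2c\mid d-a^2$ when $d\equiv 1 \pmod 4$) --- is correct, and it is in fact precisely the paper's own equivalent reformulation of Conjecture \ref{limit}, obtained from \eqref{smallmeas}, \eqref{defve} and the lower bound of Theorem \ref{main}. Up to that point you are reproducing what the paper already states, not advancing beyond it.

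The gap is everything after the reduction: neither of your two routes is carried out, and each hits an obstruction you do not overcome. The continued-fraction route founders on exactly the family the paper singles out, $d=m^2+1$, whose principal cycle consists of the single form $(P,Q)=(m,1)$ with $P\sim\sqrt{d}$; the small-$P$ form $(1+\sqrt{d})/m$ that realizes $L(d)\approx\frac{1}{2}\sqrt{D}$ lies in a non-principal class, and you give no mechanism guaranteeing that some class cycle always contains such a form. The divisor-in-a-short-interval route is, as you yourself concede, blocked for any pointwise-in-$a$ argument by the Erd\H{o}s multiplication-table phenomenon, and no method for exploiting the joint variation over $a$ is supplied. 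For calibration, the paper's own unconditional results stop at $\frac{1}{2}<L(d)/\sqrt{D}<1$ together with $\liminf_{d\to\infty}L(d)/\sqrt{D}=\frac{1}{2}$ (from the examples $d=m^2+1$), and even assuming GRH they only reach $L(d)/\sqrt{D}\leq \frac{1}{4}(\sqrt{3}+1)+o(1)=0.683\ldots$ via Lemma \ref{upperbound}. Your submission is therefore a correct reduction followed by an honest research programme, not a proof; and since the paper proves no such statement either, there is nothing in it against which a completed argument could be checked.
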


\noindent
In view of  \eqref{smallmeas} this can be equivalently written:

\begin{conjecture}
For any square-free positive integer $d$ 
 there exists  an   $a$ and $c$ with
$$ a=o(\sqrt{d}), \;\; c=(1+o(1))\sqrt{d} , $$
and $c\mid d-a^2$ when $d\equiv 2$ or 3 mod 4,  $c$ even and $2c\mid d-a^2$ 
when $d\equiv 1$ mod 4.
\end{conjecture}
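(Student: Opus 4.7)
By \eqref{smallmeas} and \eqref{defve} the conjecture is the statement that for each squarefree $d$ and each $\varepsilon>0$ one can find integers $a,c$ (with the stated parity conditions) satisfying $0\le a\le \varepsilon\sqrt{d}$, $c\in I:=[(1-\varepsilon)\sqrt{d},(1+\varepsilon)\sqrt{d}]$, and either $c\mid d-a^2$ or $2c\mid d-a^2$ according to the residue of $d$ modulo $4$; the complementary divisor $(d-a^2)/c$ then automatically lies in the same window. Thus the task reduces to producing, for every large $d$, a divisor of $d-a^2$ close to $\sqrt{d}$ for some small $a$.

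The plan is to fix $\varepsilon>0$ and hunt for candidate $c\in I$ for which $a^2\equiv d\pmod c$ admits a solution with $|a|\le \varepsilon\sqrt{d}$. First I would use quadratic reciprocity (together with Dirichlet's theorem on primes in arithmetic progressions, or an effective Chebotarev density theorem) to show that a positive proportion of the primes in $I$ have $d$ as a quadratic residue, giving $\gg_{\varepsilon}\sqrt{d}/\log d$ admissible prime moduli. Next I would exploit the heuristic that the two square roots $\pm a_c$ of $d$ modulo $c$, as $c$ ranges over these primes, are roughly equidistributed in $[0,c/2]$: the minimum across $N$ such primes should then be of order $\sqrt{d}/N=O(\log d)$, comfortably $o(\sqrt{d})$. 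The parity clause in the $d\equiv 1\pmod 4$ case is accommodated by restricting to even $c=2c'$ with $c'\approx\sqrt{d}/2$ and insisting that $a$ be odd; both are routine CRT adjustments that cost only a constant factor in the counting.

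A potentially more forgiving variant takes $c$ composite with many small prime factors: by CRT each such $c$ has $2^{\omega(c)}$ square roots of $d$, so smooth $c$ near $\sqrt{d}$ admit roots of size $O(c\cdot 2^{-\omega(c)})$, with much more room to land below $\varepsilon\sqrt{d}$. Combined with Ford's theorem on the positive density of integers possessing a divisor in a short interval around their square root, one would attempt to show that the sequence $\bigl(d-a^2\bigr)_{0\le a\le\varepsilon\sqrt{d}}$ is sufficiently ``generic'' that at least one term enjoys such a divisor.

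The principal obstacle is the equidistribution step. Proving, uniformly in $d$, that the smallest root of $a^2\equiv d\pmod c$ as $c$ varies over primes in a short interval is $o(\sqrt{d})$ is an instance of finding small solutions of a quadratic congruence simultaneously for many moduli, a problem for which only averaging statements (holding for almost all $d$) appear accessible, even under GRH via Burgess-type character sum bounds. Upgrading from ``almost all $d$'' to \emph{every} squarefree $d$, which is what the conjecture demands, seems to require genuinely new input, perhaps from the geometry of numbers applied to the lattice $\{(a,c)\in\mathbb Z^2:a^2\equiv d\pmod c\}$ or from a carefully engineered large-sieve inequality in the $c$-aspect.
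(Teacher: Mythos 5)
This statement is labelled a \emph{conjecture} in the paper, and the authors do not prove it: they only verify it computationally (pairs $a<d^{2/5}$, $|c-\sqrt{d}|<d^{2/5}$ with the required divisibility were found for all $827<d<2{,}000{,}000{,}000$, and in the $d\equiv 1 \bmod 4$ case for $d>1{,}902{,}773$) and note via \eqref{smallmeas} that it is equivalent to Conjecture \ref{limit}. So there is no proof in the paper to compare against, and your write-up should be judged on whether it actually establishes the claim. It does not: it is a research programme, and you yourself identify the fatal gap in the final paragraph. The reduction in your first paragraph is correct and matches the paper's reformulation, but everything after that rests on the ``heuristic that the two square roots $\pm a_c$ of $d$ modulo $c$ \ldots are roughly equidistributed,'' from which you infer that the minimum over $N\asymp\sqrt{d}/\log d$ prime moduli should be $O(\log d)$. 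That inference is exactly the unproven content of the conjecture. Equidistribution of roots of $a^2\equiv d \pmod c$ is known (Duke--Friedlander--Iwaniec and successors) only when $c$ ranges over essentially all moduli up to some bound, or on average over $d$; restricting $c$ to a short interval around $\sqrt{d}$ and demanding a conclusion uniform in \emph{every} squarefree $d$ is far beyond current technology, as you concede. The CRT/parity adjustments and the smooth-modulus variant do not repair this; Ford's theorem on divisors in intervals is again a positive-density (almost-all) statement and cannot deliver the ``for every $d$'' quantifier.

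Concretely, then: your proposal contains no proof of the key step, and the key step is the whole conjecture. What you have written is a reasonable heuristic justification of why one should \emph{believe} the conjecture --- indeed it is roughly the same heuristic that underlies the authors' Conjecture \ref{limit} and their numerical experiments --- but it should be presented as such, not as a proof attempt. If you want to make partial progress that is actually provable, two directions suggested by the paper itself are: (i) prove the statement for almost all $d$ (an averaged version, where the large sieve or DFI-type equidistribution genuinely applies), or (ii) prove conditional results under GRH in the spirit of the paper's Lemma \ref{upperbound}, which uses a single small prime $p$ with $\left(\frac{d}{p}\right)=1$ and Hensel lifting to a modulus $q=p^l$ near $\sqrt{d}$ to get $L(d)/\sqrt{D}\le \tfrac14(\sqrt3+1)+o(1)$; pushing $\lambda\to 1$ in that lemma is precisely what would be needed to reach the conjectured limit $\tfrac12$, and seeing why that fails with a single prime power modulus is instructive about where the real difficulty lies.
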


Checking computationally,   pairs $a$ and $c$ satisfying 
$$a < d^{2/5},\;\;\; d^{1/2}-d^{2/5} <c < d^{1/2}+d^{2/5}$$ 
and $c\mid d-a^2$  exist for all  $827 < d < 2,000,000,000$, and even $c$ with $2c\mid d-a^2$
for all  $d\equiv 1$ mod 4 with  $1,902,773<d<2,000,000,000$.

\vspace{2ex}

The $\frac{1}{2}$ in the lower bound is the optimal absolute constant.

\begin{theorem}
$$ \liminf_{d\rightarrow \infty}  \frac{L(d)}{\sqrt{D}}=\frac{1}{2}. $$
\end{theorem}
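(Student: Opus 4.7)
The lower bound $\liminf_{d\to\infty} L(d)/\sqrt{D}\geq 1/2$ is immediate from Theorem \ref{main}, so the task is to exhibit a sequence of squarefree $d\to\infty$ along which $L(d)/\sqrt{D}\to 1/2$. Guided by \eqref{smallmeas}, the plan is to find $a$ and $c$ with $c\mid d-a^2$, $a$ small, and $c\approx\sqrt d$, so that all three quantities inside the maximum in \eqref{smallmeas} approach $\sqrt d$.

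The cleanest concrete choice takes $d=n(n+1)$ with $a=0$, $c=n$, giving $\alpha=\sqrt d/n$. I would verify the routine computations: $a^2-db^2=-n(n+1)$ and $2a/c=0$, so the minimal integer leading coefficient of $k(x-\alpha)(x-\overline\alpha)$ in \eqref{defk} is $k=n$, and \eqref{MM} (equivalently, \eqref{smallmeas}) gives $M(\alpha)=n+1$. If $d$ is squarefree then, since $n\equiv 3\pmod 4$ forces $4\mid n+1$, we must have $n\equiv 1$ or $2\pmod 4$; in either case $d\equiv 2\pmod 4$, so $\sqrt D = 2\sqrt d$. Consequently
$$
\frac{L(d)}{\sqrt D}\leq\frac{M(\alpha)}{\sqrt D}=\frac{n+1}{2\sqrt{n(n+1)}}=\frac12\sqrt{1+\tfrac1n}\longrightarrow \frac12.
$$

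What remains, and is the only non-trivial ingredient, is that $d=n(n+1)$ is squarefree for infinitely many $n$ in the residue classes $1,2\pmod 4$. Since $\gcd(n,n+1)=1$, this is equivalent to the simultaneous squarefreeness of $n$ and $n+1$, which by a classical sieve argument (Carlitz) holds on a set of positive density $\prod_p(1-2/p^2)$; restricting to a residue class mod $4$ preserves positive density by a standard refinement. Letting $n\to\infty$ through this set yields $\liminf L(d)/\sqrt D\leq 1/2$, which combined with Theorem \ref{main} completes the argument. The overall structure is short; the only delicate point is this density statement, and it is routine.
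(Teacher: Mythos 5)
Your argument is correct and is essentially the paper's: the paper likewise combines the lower bound of Theorem \ref{main} with an explicit family approaching the ratio $\frac12$, taking $d=m^2+1$ and $\alpha=(\sqrt{d}+1)/m$ (so that $M(\alpha)=\sqrt{d}+1$ for $m$ odd, with $d\equiv 2$ mod $4$, and $M(\alpha)=\frac12(\sqrt{d}+1)$ for $m$ even, with $d\equiv 1$ mod $4$), where your family $d=n(n+1)$, $\alpha=\sqrt{d}/n$, $M(\alpha)=n+1$ plays the identical role. The only external input in either version is that the chosen family is squarefree infinitely often --- squarefree values of $m^2+1$ for the paper, consecutive squarefree integers for you --- a classical fact that you flag explicitly and that the paper leaves implicit.
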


This follows at once from the following examples:

\vspace{1ex}
\noindent
{\bf Small Examples.} If   $d=m^2+1$ then 
$$ M\left( \frac{\sqrt{d}+1}{m}\right)=\begin{cases}  \sqrt{d}+1 , & \text{ if $m$ is odd,}\\
 \frac{1}{2} \left( \sqrt{d}+1\right), & \text{ if $m$ is even.}  \end{cases}$$

It seems likely that the upper bound can be slightly reduced. The computations suggest  that 
the largest value occurs at $d=293$.

\begin{conjecture}
$$   \sup_d  \frac{L(d)}{\sqrt{D}} = \frac{L(293)}{\sqrt{293}}=\frac{ M\left( \frac{\sqrt{293}+15}{2}\right)}{\sqrt{293}} = \frac{17}{\sqrt{293}}=0.993150\ldots . $$
\end{conjecture}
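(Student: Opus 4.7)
Pin down $L(293)=17$. Since $293\equiv 1 \pmod 4$, the reductions in Section 2 and the opening of Section 3 restrict the minimizing $\alpha$ to $(a+\sqrt{293})/c$ with $a$ odd, $0\le a<\sqrt{293}$, and $c$ even satisfying $2c\mid 293-a^2$. For each of the nine values of $a$ the admissible $c$ form a finite set, and \eqref{smallmeas} gives $M(\alpha)=\tfrac12\max\{c,\,a+\sqrt{293},\,(293-a^2)/c\}$. A direct enumeration shows that the pair $(a,c)=(15,2)$ (and by the $c\leftrightarrow(d-a^2)/c$ symmetry $(15,34)$) achieves $M=17$, and no other admissible pair beats this; hence $L(293)=17$.

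\textbf{Step 2.} Carry out the same enumeration for every square-free $d$ in a finite range $2\le d\le N$ and verify $L(d)/\sqrt{D}<17/\sqrt{293}$ directly. The algorithm implicit in \eqref{smallmeas} is feasible well into the billions, so one may take $N$ as large as the existing tables allow.

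\textbf{Step 3.} For every square-free $d>N$, construct an admissible $\alpha$ with $M(\alpha)/\sqrt{D}<17/\sqrt{293}$. Setting $K:=34/\sqrt{293}\approx 1.986$, this amounts to finding $a\ge 0$ with $a<(K-1)\sqrt{d}$ and a divisor $c$ of $d-a^2$ (respecting the parity condition when $d\equiv 1\pmod 4$) inside the window $\left(\sqrt{d}/K,\,K\sqrt{d}\right)$. A natural route is to let $a$ range over an initial segment $[0,d^{\delta}]$ and apply divisor-distribution estimates of Hooley/Erd\H{o}s/Tenenbaum type for divisors of shifted integers $d-a^2$ lying in short intervals. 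An alternative uses the continued-fraction expansion of $\sqrt{d}$: its convergents furnish $p_n^2-dq_n^2=\pm Q_n$ with $Q_n<2\sqrt{d}$, yielding ideals in $\mathbb Z[\sqrt d]$ of norm about $\sqrt{d}$ and thence candidate pairs $(a,c)$.

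The heart of the difficulty lies entirely in Step 3. The extremal behavior at $d=293=17^2+4$ arises because $d$ is prime and only just larger than a square, forcing every admissible $c$ to sit near the edge of the allowed window. Certifying that every sufficiently large square-free $d$ admits a genuinely interior pair is a shifted-divisor problem of the same flavor as (though substantially weaker than) Conjecture \ref{limit}, and does not appear to follow from off-the-shelf tools.
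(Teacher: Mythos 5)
This statement is labelled a \emph{conjecture} in the paper, and the authors offer no proof of it: the only support given is the computational evidence of Section 3 and the table of large values in the Appendix, where $d=293$ appears as the record. So there is no ``paper's proof'' for your attempt to match, and the real question is whether your outline closes the gap. It does not. Steps 1 and 2 are fine (your enumeration for $d=293$ is correct: the odd $a\le 17$ with $c$ even and $2c\mid 293-a^2$ yield the minimum $17$ at $(a,c)=(15,2)$, matching the paper's Appendix), but as you yourself concede, Step 3 is the entire content of the problem and you do not supply an argument for it. Showing that every sufficiently large square-free $d$ admits $a=O(d^{1/2})$ and a divisor $c$ of $d-a^2$ (with the parity constraint when $d\equiv 1\bmod 4$) in the window forced by $M(\alpha)/\sqrt{D}<17/\sqrt{293}$ is precisely a weakened form of the paper's Conjecture 4.2, which the authors also leave open and only verify computationally up to $2\times 10^9$. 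Invoking ``divisor-distribution estimates of Hooley/Erd\H{o}s/Tenenbaum type'' or continued-fraction convergents is a plausible direction, but no such estimate is known that is uniform enough in $d$ to certify a divisor of some $d-a^2$ in a prescribed multiplicative window around $\sqrt{d}$ for \emph{every} large $d$; the obstruction is exactly the family of $d$ that are quadratic non-residues modulo all small primes, which Lemma \ref{nonresidue} shows can push $L(d)$ up to $\sqrt{D}-o(\sqrt{D})$ if such $d$ with no suitable divisors exist.

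The closest the paper comes to your Step 3 is Lemma \ref{upperbound} together with its corollary: if $d$ is a square modulo a suitably sized $q$ one gets $L(d)/\sqrt{D}$ bounded away from $1$, and under GRH this yields $L(d)/\sqrt{D}\le 0.683\ldots+o(1)$, which \emph{would} reduce the conjecture to your finite Steps 1--2. Unconditionally, the residue-class bounds in the corollary (e.g.\ $0.809\ldots$ for $d\equiv 1\bmod 8$, $0.8918\ldots$ for $d\equiv 1\bmod 3$) do not cover all $d$, and for the primes $p\ge 13$ the resulting bounds ($0.99371\ldots$ for $p=13$) already exceed $17/\sqrt{293}=0.99315\ldots$. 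So your proposal should be read as a correct reduction of the conjecture to an open shifted-divisor problem (or, conditionally, to GRH plus a finite check), not as a proof.
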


If we separate out  the residue classes mod 4:

\begin{conjecture}
\begin{align*}  \sup_{d \equiv 2 \text{ mod 4}}  \frac{L(d)}{\sqrt{D}}  & = \frac{L(398)}{2\sqrt{398}}=\frac{ M\left( \frac{\sqrt{398}+18}{2}\right)}{2\sqrt{398}} = \frac{\sqrt{398}+18}{2\sqrt{398}}=0.951129\ldots . \\
  \sup_{d \equiv 3 \text{ mod 4}}  \frac{L(d)}{\sqrt{D}}  & = \frac{L(227)}{2\sqrt{227}}=\frac{ M\left( \frac{\sqrt{227}+13}{2}\right)}{2\sqrt{227}} = \frac{29}{2\sqrt{227}}=0.962398\ldots . 
\end{align*}
\end{conjecture}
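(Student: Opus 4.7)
The conjecture breaks into two independent claims per residue class: (i) that $L(d)$ equals the stated value at $d = 398$ (resp.\ $d = 227$), and (ii) that $L(d)/\sqrt{D} < K$ for every other squarefree $d$ in the class, where $K$ denotes the claimed supremum. These will be handled separately.

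For (i), the upper bound $L(d) \leq$ claimed value follows from an explicit $\alpha$. At $d = 398$, Lemma \ref{ex} directly supplies $\alpha = (18+\sqrt{398})/2$. At $d = 227$, however, Lemma \ref{ex} would give $(15+\sqrt{227})/2$ with $M \approx 30.07$, which is not optimal; the winner is $\alpha = (13+\sqrt{227})/2$, whose conjugate satisfies $|\bar\alpha| > 1$, so \eqref{smallmeas} yields $M(\alpha) = \max\{c, a+\sqrt{d}, (d-a^2)/c\} = \max\{2, 13+\sqrt{227}, 29\} = 29$. To confirm these are exactly $L(d)$, enumerate every candidate $\alpha = (a+\sqrt{d})/c$ with $0 \leq a < \sqrt{d}$ and $c \mid d - a^2$ (the reduction established in Section 3), compute $M(\alpha)$ via \eqref{smallmeas}, and take the minimum. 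The search is finite, with at most $\sqrt{d}$ values of $a$ and $d^{o(1)}$ divisors each.

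For (ii), verify computationally for all squarefree $d$ in the class below some threshold $d_0$. For $d \geq d_0$ we must exhibit $(a, c)$ with $c \mid d - a^2$ and
\[\max\{c,\; a + \sqrt{d},\; (d-a^2)/c\} \;<\; 2K\sqrt{d},\]
which by \eqref{smallmeas} forces $L(d)/\sqrt{D} < K$. Concretely we need a divisor $c$ of $d - a^2$ lying in $\bigl((d-a^2)/(2K\sqrt{d}),\; 2K\sqrt{d}\bigr)$ for some $a < (2K-1)\sqrt{d}$; since $K$ is safely bounded away from $1/2$ for both classes, the window has length $\Theta(\sqrt{d})$.

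The principal obstacle is the uniform existence of such a divisor. This is qualitatively the content of Conjecture \ref{limit}, which the authors leave open and verify only computationally up to $2\times 10^9$. Heuristically, varying $a$ over $[0, d^{2/5}]$ yields many factorizations $d - a^2 = c\cdot c'$ with $c \approx \sqrt{d}$, but showing the count is positive for \emph{every} large $d$ appears to require either effective bounds on the Erd\H{o}s--Hooley $\Delta$-function of the family $\{d - a^2\}$, or an equidistribution input in the spirit of Hooley's work on divisors of $n^2 + c$. Absent such a tool, even excluding a single hypothetical squarefree $d$ that would beat the proposed winner at $d = 398$ (or $d = 227$) seems out of reach, and the conjecture must be regarded as genuinely deep.
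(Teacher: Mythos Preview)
Your analysis is sound, and it aligns with the paper's own treatment: this statement is presented in the paper as a \emph{conjecture}, not a theorem, and no proof is offered. The paper's supporting evidence is purely computational---the enumeration in Section~3 is used to compute $L(d)$ exactly for all squarefree $d$ up to $2\times 10^9$, and the Appendix lists the largest observed values of $L(d)/\sqrt{D}$ in each residue class, with $d=398$ and $d=227$ emerging as the record-holders. Your part~(i) is exactly how the paper certifies the claimed values of $L(398)$ and $L(227)$, and your part~(ii) correctly identifies the obstruction: ruling out a larger value for all remaining $d$ would require, for every large $d$ in the class, a divisor of some $d-a^2$ in a window of width $\Theta(\sqrt{d})$, which is precisely the content of the open Conjecture~4.2. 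Your conclusion that the statement ``must be regarded as genuinely deep'' is therefore the same conclusion the authors reach by labeling it a conjecture.
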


\vspace{1ex}
We found only ten values of $d$, namely $d=293,173,227,53,437,398,83,29,167,1077$,  with $L(d)/\sqrt{D}>0.9$. As can be seen in the Appendix, the large  values
on each  of the Figures  \ref{fig:onemod4s}, \ref{fig:twomod4s} \& \ref{fig:threemod4s} noticeably seem to correspond to $d$ with the property that $d$ is  a quadratic non-residues for all small primes $p\nmid d$
(specifically all  $p<\sqrt{d}$ for $d=2$ or 3 mod 4 and $p<\sqrt{d}/2$ for $d=1$ mod 4).
Most  of these  $d$  (with the exception of 437)  are of the form $d=\ell p$ with $p$ prime and $\ell$ small, and all have  $d\not\equiv 1$ mod 8.
 The following lemma shows why such $d$ have large $L(d)$ values.

\begin{lemma}\label{nonresidue}
 Suppose that $d$ is a squarefree positive integer with $d\equiv 2$  (mod 4) or 3 (mod 4)  or 5  (mod 8), and that $\left(\frac{d}{p}\right)=-1$ for
all primes $p\nmid d$ with
$$ p<\begin{cases} \sqrt{d}, & \text{ if $d\equiv 2$ or 3 (mod 4),} \\ \sqrt{d}/2,& \text{ if $d\equiv 5$ (mod 8).} \end{cases}$$
 For each odd $A\mid d$,  with $A<\sqrt{d}$,  let $m_A$ denote the integer
in $\left(\sqrt{d}/A-2,\sqrt{d}/A\right)$ with the same parity as $d$. Then, with $\varepsilon$ as in \eqref{defve},
\begin{align*}  L(d) & =\varepsilon \min_{A\mid d,A<\sqrt{d} \text{ odd}}\min\left\{ M\left( \frac{\sqrt{d}+m_AA}{2A}\right),\;  M\left( \frac{\sqrt{d}+(m_A-2)A}{2A}\right) \right\} \\
 & =\varepsilon \min_{A\mid d,A<\sqrt{d} \text{ odd}}\min\left\{ \sqrt{d}+m_AA, \; (d-(m_A-2)^2A^2)/2A  \right\} \\
 & \geq \sqrt{D}-2\varepsilon  \max_{A\mid d, A<\sqrt{d} \text{ odd}} A. 
\end{align*}

\end{lemma}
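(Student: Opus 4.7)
The plan is to use the Section~2 reduction (any $\alpha$ with $M(\alpha)<\sqrt D$ has the form $(a+\sqrt d)/c$, $b=1$, $a<\sqrt d$, with the divisibility laid out in the opening of Section~3), the QNR hypothesis, and the reflection $c\leftrightarrow c':=(d-a^2)/c$ to reduce the optimization to the explicit candidates in the statement.

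First I would factor $c=2^eAB$ where $A$ is the odd part whose prime factors divide $d$ and $B$ is the odd part coprime to $d$. For any prime $p\mid B$, the divisibility $p\mid a^2-d$ with $p\nmid d$ forces $p\nmid a$ (otherwise $p\mid d$), so $a^2\equiv d\pmod p$ with $(d/p)=1$; the hypothesis then forces $p\geq\sqrt d$ (resp.\ $\sqrt d/2$). The bound $c<\sqrt D/\varepsilon=2\sqrt d$ limits $B$ to at most one such prime, and reducing $a^2-d$ modulo $4$ and $8$ gives $e\in\{0,1\}$ for $d\equiv 2,3\pmod 4$ and $e=1$ for $d\equiv 5\pmod 8$. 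The key observation is that \eqref{smallmeas} is symmetric in $c$ and $c'$: these swap while $a+\sqrt d$ is fixed, so $M(\alpha)$ is invariant under the reflection. Symmetrizing to $c\leq c'$ gives $c^2\leq cc'=d-a^2<d$, so $c<\sqrt d$; any large prime $p\mid B$ would then give $p\leq c<\sqrt d$, contradicting the hypothesis (and in the mod~$8$ case the factor $2A\geq 2$ in $c$ makes $p\leq c/(2A)<\sqrt d/2$ still contradict $p\geq\sqrt d/2$). Hence $B=1$ and $c=2^eA$ with $A<\sqrt d$ an odd divisor of $d$.

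Within this restricted form, $A\mid a^2-d$ and squarefreeness of $A\mid d$ force $A\mid a$, so $a=mA$ with $m$ of the parity of $d$ whenever $2\mid c$. A comparison at fixed $m$ shows $c=2A$ is always at least as good as $c=A$ (the third term in the max halves, the first only doubles, the middle is unchanged), so we take $c=2A$. The crossing of $a+\sqrt d$ and $(d-a^2)/(2A)$ at $a=\sqrt d-2A$ then shows that as $m$ decreases from its largest admissible value $m_A$, the dominant term switches, and the only candidates for the minimum are $m=m_A$ (max is $\sqrt d+m_A A$) and $m=m_A-2$ (max is $(d-(m_A-2)^2A^2)/(2A)$). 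For the lower bound, $m_A A>\sqrt d-2A$ yields $\sqrt d+m_A A>2\sqrt d-2A$, while $(m_A-2)A<\sqrt d-2A$ yields $(d-(m_A-2)^2A^2)/(2A)>(d-(\sqrt d-2A)^2)/(2A)=2\sqrt d-2A$; multiplying by $\varepsilon$ gives $L(d)>\sqrt D-2\varepsilon A$ for each admissible $A$.

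The main obstacle is the reflection step, which in one stroke eliminates both the large-prime-factor and the $c$-odd cases; the rest is a direct optimization of the three-term max in \eqref{smallmeas}.
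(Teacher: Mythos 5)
Your overall route is the same as the paper's: reduce via \eqref{smallmeas} to $c\mid d-a^2$ with $a<\sqrt d$, use the quadratic non-residue hypothesis together with the symmetry of $\max\{c,\,a+\sqrt d,\,(d-a^2)/c\}$ under $c\leftrightarrow(d-a^2)/c$ to force $c\in\{A,2A\}$ with $A\mid d$ odd and $a=mA$, and then optimize the three-term maximum in $m$. (The paper uses the reflection implicitly when it passes from $A_1$ to ``$A=A_1$ or $A_1/A_2$''; you make it explicit, which is fine.) Your lower bound $2\sqrt d-2A$ and your identification of $m=m_A$ and $m=m_A-2$ as the only survivors within the $c=2A$ family are both correct.

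The gap is in your elimination of the odd-$c$ family. When $d\equiv 2$ or $3\pmod 4$ the divisibility condition is only $c\mid d-a^2$, so $c=A$ odd is admissible with $a=mA$ for $m$ of \emph{either} parity; your ``comparison at fixed $m$'' only matches a $c=A$ candidate against a $c=2A$ candidate when $m\equiv d\pmod 2$, since otherwise $2A\nmid d-m^2A^2$ and there is no $c=2A$ candidate at that $m$. In particular the candidate $\alpha=\bigl(\sqrt d+(m_A-1)A\bigr)/A$, with measure $(d-(m_A-1)^2A^2)/A$, is never compared against your two claimed minimizers, so the asserted equality for $L(d)$ is not established by your argument. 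This is exactly the point where the paper computes, writing $m_A=\sqrt d/A-\delta$ with $0<\delta<2$,
$$\frac{d-(m_A-1)^2A^2}{A}-\frac{d-(m_A-2)^2A^2}{2A}=\delta\sqrt d+A\Bigl(1-\frac{\delta^2}{2}\Bigr)>0,$$
which is the missing step. (A smaller quibble: the fixed-$m$ comparison itself can fail when $m=0$ and $2A>\sqrt d$, since then the entry $2A$ dominates the $c=2A$ maximum and exceeds $d/A$; this regime does not affect the final minimum, but it means ``always at least as good'' is not literally true as stated.)
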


\begin{proof} Suppose that $d\equiv 2$ or 3 (mod 4). Since $\left(\frac{d}{p}\right)=-1$ for all $p<\sqrt{d}$, $p\nmid d$ we have
$d-a^2=A_1$ or $2A_1$ or $A_1p$ or $2A_1p$ with $A_1 \mid d$ odd and $p>\sqrt{d}$ 
prime. Hence we can assume that $c\mid d-a^2$ is of the form $c=A_2$ or $2A_2$ or $A_2p$ or $2A_2p$
where $A_2\mid A_1$ and $0\leq a<\sqrt{d}$,  and
$$ M\left( \frac{a+\sqrt{d}}{c}\right)=\max\left\{ c,\; a+\sqrt{d},\; (d-a^2)/c\right\}.$$
 Hence with $A=A_1$ or $A_1/A_2$ we can assume $A\mid d$ odd, $a=kA$,
and it is enough to consider
$$ M\left(\frac{\sqrt{d}+kA}{A} \right)= \max \left\{ A, \sqrt{d}+kA, (d-k^2A^2)/A\right\} $$
with $A<\sqrt{d}$ or
$$  M\left(\frac{\sqrt{d}+kA}{2A} \right)= \max \left\{2 A, \sqrt{d}+kA, (d-k^2A^2)/2A\right\} $$
with $2A<\sqrt{d}$ and $k$ and $d$ the same parity. Hence
\begin{align} \label{case1}  M\left(\frac{\sqrt{d}+kA}{A} \right)  & =\begin{cases}  \sqrt{d}+kA, & \text{ if $\sqrt{d}/A -1 < k < \sqrt{d}/A$,} \\
(d-k^2A^2)/A, & \text{ if $k< \sqrt{d}/A-1, $} \end{cases}\\
 & \geq 2\sqrt{d}-A,  \nonumber
\end{align}
and for $k$ and $d$ the same parity
\begin{align} \label{case2} M\left(\frac{\sqrt{d}+kA}{2A} \right) & =\begin{cases}  \sqrt{d}+kA, & \text{ if $\sqrt{d}/A -2 < k < \sqrt{d}/A$,} \\
(d-k^2A^2)/2A, & \text{ if $k< \sqrt{d}/A-2, $} \end{cases} \\
 & \geq 2\sqrt{d}-2A. \nonumber
\end{align}
For $k\geq m_A$ the minimum of both  is plainly 
$$  M\left(\frac{\sqrt{d}+m_A A}{2A} \right)= \sqrt{d}+m_A A.$$
In \eqref{case2} the smallest for $k\leq m_A-2$ is
$$  M\left(\frac{\sqrt{d}+(m_A-2)A}{2A} \right)= (d-(m_A-2)^2A^2)/2A,$$
and for \eqref{case1} the smallest  for $k\leq m_A-1$ is
$$  M\left(\frac{\sqrt{d}+(m_A-1)A}{A} \right)= (d-(m_A-1)^2A^2)/A.$$
Writing $m_A=\sqrt{d}/A-\delta$, $0<\delta <2$ and observing that
$$   M\left(\frac{\sqrt{d}+(m_A-1)A}{A} \right)-   M\left(\frac{\sqrt{d}+(m_A-2)A}{2A} \right)=\delta\sqrt{d} +A\left( 1-\frac{\delta^2}{2}\right)   > \delta \left( \sqrt{d} -\frac{1}{2}A\right)>0 $$
the result follows. 

Similarly for $d\equiv 5$ mod 8 we must have $d-a^2 = 2^2A_1$ or $2^2A_1p$,  and our even $c$
with $2c\mid d-a^2$ must be of the form  $c=2A_2$ or $2A_2p$. Thus we again reduce to 
$$ M\left(\frac{\sqrt{d}+kA}{2A}\right) =\frac{1}{2}  \max\left\{ 2A, \sqrt{d}+kA, (d-k^2A^2)/2A\right\}  $$
with $A\mid d$ odd, $2A<\sqrt{d}$, $k$ odd, and the minimum occurs for $k=m_A$ or $m_A-2$ as before.
\end{proof}

Plainly $d\equiv 2$ or $3$  (mod 4) or 5 (mod 8)  with no divisors in $(o(\sqrt{d},\sqrt{d})$  that are quadratic non-residues for all $p<\sqrt{d}$ would have $L(d)\geq \sqrt{D}-o(\sqrt{d})$.
In particular infinitely many would  immediately give 
$$\limsup_{d\rightarrow \infty} \frac{L(d)}{\sqrt{D}}=1 $$
in contradiction to Conjecture \ref{limit}, but this seems unlikely:

\begin{conjecture}
All but finitely many squarefree $d$  have  $\left(\frac{d}{p}\right)=1$ for some odd  prime  $p<\frac{1}{2}\sqrt{D}$.

\end{conjecture}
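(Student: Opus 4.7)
The plan is to recast the conjecture as a classical statement about the least prime quadratic residue modulo the fundamental discriminant $D$ of $\mathbb{Q}(\sqrt{d})$, then to invoke Burgess's character sum inequality in the spirit of Vinogradov--Linnik.

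First I would translate. For any odd prime $p\nmid d$ we have $\left(\frac{d}{p}\right)=\chi_D(p)$, where $\chi_D$ is the real primitive character of conductor $D$; this uses $D=d$ when $d\equiv 1\pmod 4$ and $D=4d$ otherwise, together with $\left(\frac{4}{p}\right)=1$. The conjecture then becomes: for all but finitely many positive fundamental discriminants $D$, the least odd prime $p$ with $\chi_D(p)=1$ lies below $\tfrac12\sqrt{D}$.

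Second, I would argue by contradiction. Suppose $\chi_D(p)\in\{-1,0\}$ for every odd prime $p\le x:=\tfrac12\sqrt{D}$. Then for each $x$-smooth integer $n$ coprime to $D$, $\chi_D(n)=(-1)^{\Omega(n)}$, so on a carefully chosen subfamily of smooth integers (for instance squarefree $n$ with a fixed parity of $\omega(n)$, or smooth $n$ with a prescribed small prime factor, weighted by a fundamental-lemma sieve) the signs $(-1)^{\Omega(n)}$ can be forced to bias in one direction. This gives a lower bound on $|\sum_{n\le N}\chi_D(n)|$ that contradicts Burgess's inequality
\[
\biggl|\sum_{n\le N}\chi_D(n)\biggr|\ll_\epsilon N^{1-1/r}D^{(r+1)/(4r^2)+\epsilon}
\]
for $r$ chosen optimally (with no restriction on $r$, since fundamental discriminants are cubefree). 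The upshot is the bound $x\ll_\epsilon D^{\theta+\epsilon}$ for an absolute $\theta<\tfrac12$ (one may take $\theta=\tfrac{1}{4\sqrt{e}}$, mirroring the Vinogradov--Linnik bound for the least prime non-residue). Since $D^{\theta+\epsilon}<\tfrac12\sqrt{D}$ for $D$ sufficiently large, only finitely many $d$ can fail the conjecture.

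The main obstacle is the parity step inside the contradiction. In the more familiar dual problem of bounding the least prime non-residue, every $x$-smooth integer automatically satisfies $\chi_D(n)=+1$, so a positive proportion of $n\le N$ contributes $+1$ and Polya--Vinogradov is immediately broken. Here we only have $\chi_D(n)=(-1)^{\Omega(n)}$ on smooth $n$, and these signs can a priori cancel; one therefore has to isolate a structured subfamily on which the bias survives, and control the error from $n$ sharing factors with $D$. Once this parity bias is secured, applying Burgess with the optimal $r$ is routine.
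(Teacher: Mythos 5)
First, a point of comparison: the statement you are proving is labelled a \emph{Conjecture} in the paper, and the authors give no proof of it. The only support they offer is conditional: assuming GRH for the character $\chi$ mod $4d$, the bound \eqref{GRH} yields a prime $p\ll\log^4 D$ with $\left(\frac{d}{p}\right)=1$. They also show unconditionally that any exceptional $d$ must have the very special shape $d=(kA)^2\pm 2A$ or $((2k-1)A)^2\pm 4A$ with $A\mid d$, but they do not (and cannot, with the tools used) show there are only finitely many such $d$. So an unconditional proof along your lines would be a genuine theorem going beyond the paper, and it needs to be held to that standard.

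Held to that standard, your argument has a gap at exactly the point you flag and then dismiss as securable: the parity step is not a technicality to be handled later, it is the entire difficulty, and it is a known fundamental obstruction. If $\chi_D(p)=-1$ for all odd primes $p\le x$, then on $x$-smooth $n$ coprime to $2D$ you have $\chi_D(n)=(-1)^{\Omega(n)}=\lambda(n)$, and the full interval sum $\sum_{n\le N}\chi_D(n)$ then behaves like $\sum_{n\le N}\lambda(n)=o(N)$ --- perfectly consistent with Burgess, so no contradiction arises from intervals. Your proposed fix is to restrict to a structured subfamily (squarefree smooth $n$ with $\omega(n)$ even, or sieve-weighted smooth numbers): on such a set the sum indeed equals the cardinality of the set, but Burgess bounds character sums over \emph{intervals}, not over sparse multiplicatively-defined sets, so this large sum contradicts nothing. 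To convert it into interval information you must detect the parity of $\Omega(n)$, which is precisely the parity problem that sieve methods provably cannot resolve; equivalently, you would need to rule out that $\chi_D$ ``pretends'' to be $\lambda$, and no Burgess-type input does this. This is why the least prime quadratic \emph{residue} is a much harder problem than the least prime non-residue: the exponent $\tfrac{1}{4\sqrt e}$ you quote is the Vinogradov--Linnik--Burgess bound for non-residues, and no unconditional analogue of strength $D^{1/2-\delta}$ is known for residues in the generality needed here (the known unconditional results of Linnik--A.~I.~Vinogradov type, which proceed instead by counting divisors of the values $x^2-d$, cover restricted families such as prime discriminants and give $d^{1/4+\epsilon}$ there). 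Indeed, the paper's own reduction shows the exceptional $d$ are exactly discriminants of Richaud--Degert type whose associated quadratic polynomials are prime-producing, which ties the conjecture to class-number-one--type problems for real quadratic fields that remain open; this is strong evidence that no soft Burgess argument can close the gap.
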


Assuming  GRH for the mod $4d$ character 
$$\chi (n) := \begin{cases} \left(\frac{d}{n}\right),  & \text{ if $\gcd (n,4d)=1$,} \\
                0, & \text{ otherwise,} 
             \end{cases} $$
(where  $\left(\frac{d}{n}\right)$ denotes the Jacobi symbol),
 we have the bound
\be \label{GRH} \left| \sum_{n\leq x} \chi (n)\Lambda (n)\right| \ll  x^{\frac{1}{2}}\log^2 (Dx)\ee
 (see, for example, \cite[Chapter 20]{Davenport}), and so
we should in fact have $\left(\frac{d}{p}\right)=1$  for some  prime $p\ll \log^4 D$.

Note, a squarefree $d\not\equiv 1 $  mod 8 with $\left(\frac{d}{p}\right)=-1$ for all odd primes $p\nmid d$ with $p<\frac{1}{2}\sqrt{D}$ 
must be of the form $d=(kA)^2\pm 2A$  or $((2k-1)A)^2\pm 4A$,  for some $k$ and squarefree odd  $A\mid d$ with $A<\frac{1}{2}\sqrt{D}$. To see this,
write $d=N^2+r$, $N=[\sqrt{d}]$,  $1\leq r \leq 2N$. If $r$ is even then  $A=r/2$ is  odd if $d\equiv 2,3$ mod 4 and $A=r/4$ is odd if $d\equiv 5$ mod 8.
Since  $d$ is a square mod $A$ we must have  $p\mid A\Rightarrow p\mid d$.  As $d$ is squarefree,  $A<\frac{1}{2}\sqrt{D}$  must be squarefree with  $A\mid d$,  giving 
$d=N^2+2A$ or $N^2+4A$ with $A\mid N$. 
Similarly for $r$  odd  
$$d=\left(\frac{r+1}{2}\right)^2+ \left(N-\frac{1}{2}(r-1)\right) \left(N+\frac{1}{2}(r-1)\right), $$
with $A=N-\frac{1}{2}(r-1)$ odd for $d=2$ or 3 mod 4 and $A=\frac{1}{2} \left(N-\frac{1}{2}(r-1)\right)$ odd
for $d\equiv 5$ mod 8. Since $d$ is a square mod A, $A<\frac{1}{2}\sqrt{D}$ is squarefree with $A\mid d$,  giving  $d=(N+1)^2-2A$ or $(N+1)^2-4A$ with $A\mid N+1$.

Conversely if $d$ is a quadratic residue mod $p$ for a suitably sized $p$  or if $d\equiv 1$ mod 8 then we can obtain 
a bound less than one for $L(d)/\sqrt{D}$:

\begin{lemma}\label{upperbound}
Suppose that  $d$ is a square mod $q$, where $q$ is odd or $4\mid q$ and $\lambda$ defined by
$$ \lambda \sqrt{d}=\begin{cases} q, & \text{ if $q$ is odd,}\\ \frac{1}{4}q, & \text{ if $q$ is even,} \end{cases}$$  has  $0<\lambda <1$.  Then
$$ \frac{L(d)}{\sqrt{D}} \leq \begin{cases} \frac{1}{2}(1+\lambda + \sqrt{(1-\lambda)^2 -4\lambda^2}), &  \text{ if }0< \lambda <  \frac{1}{4}(\sqrt{5}-1), \\
\frac{1}{4\lambda}, &  \text{ if } \frac{1}{4}(\sqrt{5}-1) <\lambda <\frac{1}{2}(\sqrt{3}-1) ,\\ 
\frac{1}{2}(1+\lambda), &   \text{ if }\frac{1}{2}(\sqrt{3}-1) <\lambda <1.
\end{cases} $$
\end{lemma}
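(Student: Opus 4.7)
The plan is to exhibit, for each of the three ranges of $\lambda$, an explicit $\alpha=(a+\sqrt{d})/c\in\mathbb Q(\sqrt{d})$ whose Mahler measure is at most the claimed bound times $\sqrt{D}$. The hypothesis $d\equiv a_0^2\pmod q$ gives a preferred residue class: take $a\equiv\pm a_0\pmod q$ and set $c$ as a specific rational divisor of $q$ (or $2q$) chosen so that the divisibility prerequisite of \eqref{smallmeas} is satisfied in each of the four combinations of $q\bmod 2$ and $d\bmod 4$. In every such combination a short calculation shows $\varepsilon c/\sqrt{D}\in\{\lambda/2,\lambda\}$, and the quantity to bound reduces to
\[
\tfrac12\max\{c_0,\,1+t,\,(1-t^2)/c_0\},\qquad t:=a/\sqrt{d},\quad c_0\in\{\lambda,2\lambda\};
\]
the larger $c_0=2\lambda$ (arising when $d\equiv 1\pmod 4$) is the worst case and it suffices to handle that.

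The two nonconstant entries meet at the crossover $t^{\ast}=1-c_0$, where both equal $2-c_0$, so the ideal minimum would be $(2-c_0)/2$. The residue restriction $a\equiv\pm a_0\pmod q$ forces $t$ into a union of two arithmetic progressions whose common difference, expressed in the $t$-scale, is $c_0$, so the nearest admissible $t$ lies within $c_0/2$ of $t^{\ast}$ in the worst case. Taking the admissible $t$ on whichever side of $t^{\ast}$ yields the smaller maximum, and then maximising over the worst-case position $\theta\in[0,1]$ of $t^{\ast}$ inside the admissible gap, produces a piecewise minimax whose three regimes match the three cases of the lemma.

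\textbf{Case 3} ($\lambda>(\sqrt{3}-1)/2$): the upper-side bound $(1+t)/2$ dominates, realised by an admissible $t\le\lambda$, giving $(1+\lambda)/2$. \textbf{Case 2} ($(\sqrt{5}-1)/4<\lambda<(\sqrt{3}-1)/2$): the third entry dominates at an $a\approx 0$ choice, giving $1/(4\lambda)$. \textbf{Case 1} ($\lambda<(\sqrt{5}-1)/4$): both sides of the minimax are active; setting the two candidate bounds equal and solving the resulting quadratic in $\theta$ yields exactly $\tfrac12(1+\lambda+\sqrt{(1-\lambda)^2-4\lambda^2})$, whose discriminant is nonnegative precisely on this range. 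The main obstacle is this Case 1 quadratic balancing: one must verify that the worst-case position of $t^{\ast}$ genuinely produces this expression (so that no alternative choice of $c$ or $a$ beats it), and that the thresholds $(\sqrt{5}-1)/4$ and $(\sqrt{3}-1)/2$ are exactly where consecutive case formulas coincide, anchoring the piecewise bound to the geometry of the admissible residues.
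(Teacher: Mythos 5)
Your overall architecture is the same as the paper's: lift the square root of $d$ mod $q$ to an arithmetic progression of admissible numerators $a$ for a denominator $c$ with $c/\sqrt{d}=2\lambda$, note that \eqref{smallmeas} reduces everything to $\tfrac12\max\{c/\sqrt d,\,1+|a|/\sqrt d,\,(d-a^2)/(c\sqrt d)\}$, and bound the minimum over the two progression elements nearest the crossover $a\approx(1-2\lambda)\sqrt d$ by a minimax over the position of the progression. (The paper parametrises the position by $\delta\in(0,2)$ with $a=\sqrt d-\delta q$ and works with $\alpha_1=(\sqrt d+r)/2q$, $\alpha_2=(\sqrt d+r-2q)/2q$, obtaining $M(\alpha_1)=\sqrt D(1-\tfrac{\delta}{2}\lambda)$ and $M(\alpha_2)=\sqrt D(1+\tfrac{\delta}{2})(1-\lambda-\tfrac{\delta}{2}\lambda)$.) Your Cases 2 and 3, read as ``pick the admissible $a$ nearest $0$, which satisfies $|a|\le q$ by the $\pm a_0$ symmetry,'' can be made to work and are if anything slightly simpler than the paper's route (which instead uses the averaging trick $\min\{M_1,M_2\}\le\tfrac12(M_1+M_2)=\tfrac12(1+\lambda)\sqrt D$ when $\alpha_2<1$); but you must then say this explicitly, use $1+|t|$ rather than $1+t$ for negative $a$, and build the parity of $d$ into the modulus (the admissible progression has spacing $2q$, not $q$, which is why $c/\sqrt d=2\lambda$ in every case, not only for $d\equiv1\pmod 4$).

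The genuine gap is Case 1, which you yourself flag as ``the main obstacle'' and do not carry out, and the one concrete check you offer there is wrong: $(1-\lambda)^2-4\lambda^2=(1-3\lambda)(1+\lambda)$ is nonnegative for all $\lambda\le\tfrac13$, not ``precisely'' for $\lambda<\tfrac14(\sqrt5-1)\approx0.309$, so the discriminant does not explain the case boundary. The actual mechanism (which your sketch never reaches) is this: with $u=\delta/2$, the upper candidate $1-\lambda u$ is decreasing while the lower candidate $(1+u)(1-\lambda-\lambda u)$ is a downward parabola with vertex at $u_v=\tfrac{1}{2\lambda}-1$; equating them gives $\lambda u^2-(1-\lambda)u+\lambda=0$, whose smaller root $u^*$ yields the value $\tfrac12\bigl(1+\lambda+\sqrt{(1-\lambda)^2-4\lambda^2}\bigr)$, and the monotonicity argument bounding $\min\{M_1,M_2\}$ by this value requires $u^*\le u_v$, which holds precisely when $4\lambda^2+2\lambda-1\le0$, i.e. $\lambda\le\tfrac14(\sqrt5-1)$; beyond that the parabola's maximum $\tfrac{1}{4\lambda}$ is the correct bound. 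Without identifying $u^*\le u_v$ as the source of the first threshold, the piecewise statement of the lemma is asserted rather than proved.
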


Notice that if we assume GRH then estimate  \eqref{GRH}, with
$$\sum_{x-y \leq n\leq x} \Lambda(n)=y +O(x^{\frac{1}{2}}\log^2x)$$  from assuming RH, guarantees that $\left(\frac{d}{p}\right)=1$ for  some prime $p$ in \
$$\left(\frac{1}{2}(\sqrt{3}-1)d^{\frac{1}{2}}, \frac{1}{2}(\sqrt{3}-1)d^{\frac{1}{2}}+cd^{\frac{1}{4}}\log^2d\right)$$
for suitably large $c$, and Lemma \ref{upperbound} gives
$$ \frac{L(d)}{\sqrt{D}} \leq  \frac{1}{4}(\sqrt{3}+1)+ O\left(\frac{\log^2d}{d^{\frac{1}{4}}}\right) = 0.683012\ldots +o(1). $$

\begin{proof}
Suppose that  $r_0$  has $r_0^2\equiv d$ mod $q$.  If $q$ is odd we take $r$ to be the integer in
$(\sqrt{d}-2q,\sqrt{d})$ with the same parity as $d$ and $r\equiv r_0$ mod $q$, write $r=\sqrt{d}-\delta q$,
and set
$$ \alpha_1= \frac{\sqrt{d} +r}{2q}, \;\;\alpha_2= \frac{\sqrt{d} +r -2q}{2q}.$$
Notice that $c=2q$ and $a=r$ or $r-2q$ will have $c\mid (d-a^2)$ with $2c\mid d-a^2$
when $d=1$ mod 4.

For $4\mid q$  (which of course only occurs when $d=1$ mod 4) we write $q=2^lq_1$ with $q_1$ odd and $l\geq 2$ and take $r$ to be the integer in $(\sqrt{d}- 2^{l-1}q_1,\sqrt{d})$ with
$r\equiv r_0$ mod $2^{l-1}q_1$ and set
$$ \alpha_1= \frac{\sqrt{d} +r}{2^{l-1}q_1}, \;\;\alpha_2= \frac{\sqrt{d} +r -2^{l-1}q_1}{2^{l-1}q_1}. $$
Again $c=2^{l-1}q_1$ and $a=r$ or $r-2^{l-1}q_1$ will have $2c\mid (d-a^2)$.

Writing $r=\sqrt{d}-\delta q$ for  $q$ odd, and $r=\sqrt{d}-\delta 2^{l-2}q_1$
for $q$ even, we have $r=(1-\lambda \delta)\sqrt{d}$ with $0<\delta <2$ and
$$ \alpha_1= \frac{(2-\delta \lambda)}{2\lambda}, \;\; \overline{\alpha}_1=-\frac{\delta}{2}, \;\; 
\alpha_2= \frac{(2-2\lambda- \delta \lambda)}{2\lambda}, \;\; \overline{\alpha}_2=-\frac{\delta}{2}-1.$$
For $\alpha_1$ and $\alpha_2$ we also plainly have  $k=\epsilon c=2\epsilon \lambda \sqrt{d}=\lambda \sqrt{D}$.

Clearly  $\alpha_1>0$, $\alpha_2>-1$,  $-1<\overline{\alpha}_1 <0$ and  $-2< \overline{\alpha}_2<-1$.

If $\alpha_1<1$ then $M(\alpha_1)=\lambda \sqrt{D} <\frac{1}{2}(1+\lambda)\sqrt{D}$. Hence we can assume that $\alpha_1>1$ (this is automatic for $\lambda <\frac{1}{2}$).

 So
$$ M(\alpha_1)= \lambda \sqrt{D}\alpha_1 = \sqrt{D} \left( 1-\frac{\delta}{2}\lambda\right). $$
If  $\alpha_2<1$ then
$$ M(\alpha_2)= \lambda \sqrt{D} |\overline{\alpha}_2|= \sqrt{D} \lambda \left(1 +\frac{\delta}{2}\right), $$
and plainly 
$$ \min\{ M(\alpha_1) , M(\alpha_2)\} \leq \frac{1}{2} \left( M(\alpha_1)+M(\alpha_2)\right)= \frac{1}{2}(1+\lambda) \sqrt{D}. $$
So we can assume that $\alpha_2>1$ and
$$ M(\alpha_2) = \lambda \sqrt{D} |\overline{\alpha}_2|\alpha_2= \sqrt{D} \left(1+\frac{\delta}{2}\right)\left( 1-\lambda - \frac{\delta}{2}\lambda\right). $$
Observing that the quadratic is maximized for $\frac{\delta}{2}= \frac{1}{2\lambda}-1$ we plainly have
$$ M(\alpha_2) \leq \sqrt{D} \frac{1}{4\lambda}$$
with this less than  $\frac{1}{2}(1+\lambda)\sqrt{D}$
for $\frac{1}{2}(\sqrt{3}-1)<\lambda <1$. 
For $\lambda <\frac{1}{4}(\sqrt{5}-1)$ the value $\frac{\delta}{2}= \frac{1}{2\lambda}\left(1-\lambda -\sqrt{ (1-\lambda)^2-4\lambda^2}\right)$ equating $M(\alpha_1)$
and $M(\alpha_2)$ is less than $\frac{1}{2\lambda}-1$ and the minimum of the two is at most
the value at that point:
$$ \min\{M(\alpha_1),M(\alpha_2)\} \leq \sqrt{D} \left( \frac{1}{2}(1+\lambda)+\frac{1}{2}\sqrt{ (1-\lambda)^2-4\lambda^2}\right). $$ 
\end{proof}

In particular from the lemma we immediately obtain a bound away from 1 for the $d\equiv 1$ mod 8.

\begin{corollary}
If $d\equiv 1$ (mod 8) then 
$$\frac{L(d)}{\sqrt{D}} \leq \frac{1}{4}(\sqrt{5}+1)=0.809016\ldots  . $$ 
If $d\equiv 1$ (mod 3) then
$$\frac{L(d)}{\sqrt{D}} \leq \frac{1}{7}(2+3\sqrt{2})=0.891805 \ldots  . $$
\end{corollary}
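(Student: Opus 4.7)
The plan is to apply Lemma~\ref{upperbound} along a geometric progression of admissible $q$-values, exploiting that whenever $d$ is a quadratic residue modulo $p$ (or modulo~$8$), Hensel's lemma ensures that $d$ is a square modulo every power of $p$ (respectively of~$2$). The parameter $\lambda$ in that lemma then scales proportionally with $q$, so one can select the exponent whose corresponding $\lambda$ lies in an interval on which the three-piece bound $f(\lambda)$ of Lemma~\ref{upperbound} is uniformly controlled. The two critical $\lambda$-values to target are those at which equating the outer-region formulas $f(\lambda)$ and $f(r\lambda)$ forces equality: for $r=2$ one finds $\lambda=(\sqrt 5-1)/4$ (a root of $4\lambda^2+2\lambda-1=0$, with common value $f(\lambda)=(\sqrt 5+1)/4$), and for $r=3$ one finds $\lambda=(2\sqrt 2-1)/7$ (a root of $7\lambda^2+2\lambda-1=0$, with common value $f(\lambda)=(2+3\sqrt 2)/7$).

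For $d\equiv 1\pmod{8}$, take $q=2^k$ with $k\geq 3$, so that $4\mid q$ and $\lambda_k=2^{k-2}/\sqrt d$. Let $k$ be the smallest such integer with $\lambda_k\geq(\sqrt 5-1)/4$; one easily checks $\lambda_k<(\sqrt 5-1)/2<1$ (for $k>3$ by the doubling argument, and for $k=3$ because every squarefree $d\equiv 1\pmod 8$ satisfies $d\geq 17>6+2\sqrt 5$). Thus $\lambda_k\in[(\sqrt 5-1)/4,(\sqrt 5-1)/2)$, and on this range Lemma~\ref{upperbound} gives at most $(1+\sqrt 5)/4$: the piece $1/(4\lambda)$ decreases from its left-endpoint value $(\sqrt 5+1)/4$, while the piece $(1+\lambda)/2$ increases to $(1+\sqrt 5)/4$ as $\lambda\to(\sqrt 5-1)/2$.

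For $d\equiv 1\pmod{3}$, take $q=3^k$ with $k\geq 0$ (the trivial $q=1$ is always admissible), so $\lambda_k=3^k/\sqrt d$ scales by a factor of~$3$. Let $k$ be the smallest index with $\lambda_k\geq\mu:=(2\sqrt 2-1)/7$; then by the same geometric-progression argument $\lambda_k<3\mu=(6\sqrt 2-3)/7<1$ (the base case $k=0$ occurs only for $d\in\{7,10,13\}$, where still $\lambda_0\leq 1/\sqrt 2<3\mu$). Hence $\lambda_k\in[\mu,3\mu)$, and on this interval each piece of $f$ is bounded by $(2+3\sqrt 2)/7$: the region-$1$ expression decreases from $f(\mu)=(2+3\sqrt 2)/7$, the region-$2$ expression $1/(4\lambda)$ never exceeds $(\sqrt 5+1)/4<(2+3\sqrt 2)/7$, and the region-$3$ expression $(1+\lambda)/2$ grows to $(1+3\mu)/2=(2+3\sqrt 2)/7$ at $\lambda=3\mu$. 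The main work is nailing down the two critical $\lambda$-values and $f$-values flagged in the first paragraph; once those identities are in hand, the uniform bounds follow from a routine monotonicity check on the three pieces of $f$.
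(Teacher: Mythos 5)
Your proposal is correct and follows essentially the same route as the paper: use Hensel's lemma to lift the square root of $d$ to a power $q$ of $2$ (resp.\ of $3$) chosen so that $\lambda$ lands in the multiplicative window $[(\sqrt5-1)/4,(\sqrt5-1)/2)$ (resp.\ $[(2\sqrt2-1)/7,3(2\sqrt2-1)/7)$), then bound the piecewise function of Lemma \ref{upperbound} over that window; the paper does exactly this, stating the general formula $\tfrac12\bigl(1+p/(1+\sqrt{(p-1)^2+4})\bigr)$ for odd $p$ and specializing to $p=3$. Your write-up is if anything slightly more careful, since you verify the monotonicity of the three pieces and check the small-$d$ base cases explicitly, which the paper leaves implicit.
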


Computations indicate room for improvement in these bounds.

\begin{conjecture}
\begin{align*} 
\sup_{d\equiv 1 \text{ mod } 8}  \frac{L(d)}{\sqrt{D}} & =\frac{L(41)}{\sqrt{41}}=\frac{M\left(\frac{\sqrt{41}+27}{4}\right)}{\sqrt{41}}=\frac{\sqrt{41}+3}{2\sqrt{41}}=0.734261\dots,\\
\sup_{d\equiv 1 \text{ mod } 3}  \frac{L(d)}{\sqrt{D}}   & =\frac{L(13)}{\sqrt{13}}=\frac{M\left(\frac{\sqrt{13}+1}{2}\right)}{\sqrt{13}}=\frac{4}{\sqrt{13}}=0.832050\ldots  .
\end{align*}
\end{conjecture}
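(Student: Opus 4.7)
The plan is to treat the two equalities separately, each by combining a finite verification at the purported extremum with an asymptotic upper bound valid for every other $d$ in the class. First, for $d=41$ I would check $L(41)=(\sqrt{41}+3)/2$ directly from (\ref{smallmeas}) by enumerating all pairs $(a,c)$ with $a\in\{1,3,5\}$ odd, $c$ even and $2c\mid 41-a^2$, computing $\tfrac{1}{2}\max\{c,\,a+\sqrt{41},\,(41-a^2)/c\}$ for each, and confirming the minimum occurs at $(a,c)=(3,4)$. (The expression $(\sqrt{41}+27)/4$ in the statement appears to be a typo for $(\sqrt{41}+3)/4$, which is the actual minimiser.) An identical short enumeration over $a\in\{0,1,2,3\}$ with admissible $c$ handles $d=13$.

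For every other $d$ in the appropriate class the task is to produce an $\alpha\in\mathbb{Q}(\sqrt{d})\setminus\mathbb{Q}$ whose Mahler measure strictly undercuts the conjectured supremum. I would split at an explicit threshold $N_0$, verifying the inequality computationally via (\ref{smallmeas}) for $d\le N_0$ and applying Lemma \ref{upperbound} for $d>N_0$. For $d\equiv 1\pmod 8$, any $q=8m$ with $m$ odd and $(d/p)=1$ for every prime $p\mid m$ is admissible in Lemma \ref{upperbound} and gives $\lambda=2m/\sqrt{d}$; driving $\lambda$ into the window $(0.341,\,0.469)$ forces $L(d)/\sqrt{D}<0.7343$ (using $1/(4\lambda)$ for $\lambda<(\sqrt{3}-1)/2$ and $(1+\lambda)/2$ above). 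For $d\equiv 1\pmod 3$ the same construction with $q=3m$ and $\lambda=3m/\sqrt{d}$ requires only $\lambda\in(0.302,\,0.664)$, a considerably wider window. The GRH-conditional bound $L(d)/\sqrt{D}\le(\sqrt{3}+1)/4+o(1)=0.6830\ldots$ derived from (\ref{GRH}) in the excerpt is already strictly below both $0.7343\ldots$ and $0.8321\ldots$, so this strategy closes the argument once $N_0$ is fixed and the inequality has been tabulated for $d\le N_0$.

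The main obstacle is effectivising the construction of such an $m$ unconditionally: one needs, for every sufficiently large $d$, a squarefree integer $m$ lying in a prescribed interval of relative width $O(1)$ whose prime factors are all quadratic residues of $d$, and $N_0$ must be small enough for the computational half to be feasible. Under GRH the primes-in-short-intervals consequence of (\ref{GRH}) immediately supplies a single such prime, the resulting $\lambda$ being within $O(d^{-1/4}\log^2d)$ of its target, and $N_0$ can be pinned down explicitly. Without GRH one would need either a Burgess-type sharpening of the character-sum estimate that captures primes in the narrow $\lambda$-window, or a sieve argument using several small primes $p$ with $(d/p)=1$ to build $m$ combinatorially; in either scenario the Mahler-measure analysis itself is routine, and the genuine difficulty lies entirely in the quadratic-residue distribution problem.
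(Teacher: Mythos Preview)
The statement you are attempting to prove is labelled a \emph{conjecture} in the paper, and the paper does not prove it. The \texttt{proof} environment that appears immediately after it in the source is actually the (misplaced) proof of the preceding Corollary---it establishes the weaker bounds $L(d)/\sqrt{D}\le \tfrac14(\sqrt5+1)$ for $d\equiv 1\pmod 8$ and $L(d)/\sqrt{D}\le \tfrac17(2+3\sqrt2)$ for $d\equiv 1\pmod 3$ by choosing $q=2^l$ or $q=3^l$ in Lemma~\ref{upperbound} with $\lambda$ in a dyadic (resp.\ triadic) window. That argument cannot reach the sharper constants $0.7343\ldots$ and $0.8321\ldots$, which is precisely why the authors leave the stronger statement as a conjecture.

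Your outline is therefore not comparable to anything in the paper: you are sketching an attack on an open problem. The finite parts of your plan (the direct evaluation of $L(41)$ and $L(13)$ via \eqref{smallmeas}, and the tabulation for $d\le N_0$) are unproblematic, and your observation that $(\sqrt{41}+27)/4$ is a misprint for $(\sqrt{41}+3)/4$ is correct. Your large-$d$ strategy---manufacture an admissible $q$ in Lemma~\ref{upperbound} with $\lambda$ in a narrower window than the dyadic one the paper uses---is also the natural idea, and you have correctly identified the genuine obstruction: unconditionally one does not know how to produce, for every large $d$, an integer $m$ in an interval of bounded relative width all of whose prime factors satisfy $\bigl(\tfrac{d}{p}\bigr)=1$. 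Under GRH your argument goes through exactly as you describe (indeed the paper already notes the GRH-conditional bound $\tfrac14(\sqrt3+1)+o(1)$, which beats both targets). Without GRH, neither Burgess nor current sieve technology places a suitable prime or almost-prime in a window of \emph{constant} relative width around $c\sqrt d$, so the unconditional version of your plan is presently out of reach---which is consistent with the authors' decision to state this only as a conjecture.
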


\begin{proof}
If $d\equiv 1$ (mod 8) then we can solve $r^2\equiv d$ mod $2^{l}$ for any $l$. 
Hence if  we  pick $l$  such that 
 $\frac{1}{4}(\sqrt{5}-1)\sqrt{d} \leq 2^{l-2} \leq \frac{1}{2}(\sqrt{5}-1)\sqrt{d}$ and
we can apply the lemma with  $\frac{1}{4}(\sqrt{5}-1)\leq \lambda \leq  \frac{1}{2}(\sqrt{5}-1)$.
Likewise, for an odd prime $p$, if $p\nmid d$ and $\left(\frac{d}{p}\right)=1$ then we can solve
$r^2\equiv d$ mod $p^{l}$ for any $l$.  Choosing $l$ so that 
$$ \frac{1}{1+\sqrt{(p-1)^2+4}}\sqrt{d} \leq p^l \leq  \frac{p}{1+\sqrt{(p-1)^2+4}}\sqrt{d},  $$
and applying the lemma with $q=p^l$ gives
$$ \frac{L(d)}{\sqrt{D}}\leq \frac{1}{2} \left( 1+ \frac{p}{1+\sqrt{(p-1)^2+4}}\right). $$
Taking  $p=3$ gives the result claimed for   $d\equiv 1$ mod 3. 

Likewise, for  $d\equiv 1,4$ mod 5 we get the upper bound $0.956859\ldots$ (from $d=29$ we know  $0.928476\ldots$ would be best possible).

For $ d\equiv 1,2,4$ mod 7 we get $0.977844\ldots $   and for $d\equiv 1,3,4,5,9$ mod 11 the  bound $0.991157\ldots $   (from $d=53$ these can not be reduced below $0.961523 \ldots$).

For $d\equiv 1,3,4,9,10,12$ mod 13 we get the  bound $0.993713\ldots $  (the optimal bound is likely $ 0.988371\ldots$ from $d= 173$).

For $d\equiv 1,2,4,8,9,13,15,16$ mod 17 our bound gives  $0.996364\ldots $  (optimal is probably  $0.993150\ldots $  at  $d=293$).
\end{proof}


\section{Appendix of Large Values}

We give the largest values found  in Figure \ref{fig:onemod4s}, Figure \ref{fig:twomod4s} \& 
Figure \ref{fig:threemod4s}  down to the first value 
not satisfying the quadratic non-residue  conditions of  Lemma \ref{nonresidue}.

\vspace{2ex}
\noindent
{\bf Largest values for $d\equiv 1 $ mod 4}.
\begin{align*}
\frac{L(293)}{\sqrt{293}} & =\frac{ M\left( \frac{\sqrt{293}+15}{2}\right)}{\sqrt{293}} = \frac{17}{\sqrt{293}}=0.993150\ldots, \;\; \; \left(\frac{293}{p}\right)  =-1, p=3,5,7,11,13,\\
\frac{L(173)}{\sqrt{173}} & =\frac{ M\left( \frac{\sqrt{173}+11}{2}\right)}{\sqrt{173}} = \frac{13}{\sqrt{173}}=0.988371\ldots, \;\;\; \left(\frac{173}{p}\right)=-1, p=3,5,7,11,  \\
\frac{L(53)}{\sqrt{53}} & =\frac{ M\left( \frac{\sqrt{53}+5}{2}\right)}{\sqrt{53}} = \frac{7}{\sqrt{53}}=0.961523 \ldots,\;\;\;   \left(\frac{53}{p}\right) =-1, \;p=3,5, \\
\frac{L(437)}{\sqrt{437}} & =\frac{ M\left( \frac{\sqrt{437}+19}{2}\right)}{\sqrt{437}}  = \frac{\sqrt{437}+19}{2\sqrt{437}}=0.954446\ldots , \;\;\; \left(\frac{437}{p}\right) =-1, \;p=3,5,7,11,13,17,29 ,   \\ 
\frac{L(29)}{\sqrt{29}} & =\frac{ M\left( \frac{\sqrt{29}+3}{2}\right)}{\sqrt{29}} = \frac{5}{\sqrt{29}}=0.928476 \ldots,\;\;\; \left(\frac{29}{p}\right) =-1, \;p=3, \\  
\frac{L(1077)}{\sqrt{1077}} & =\frac{ M\left( \frac{\sqrt{1077}+27}{6}\right)}{\sqrt{1077}} = \frac{\sqrt{1077}+27}{2\sqrt{1077}}=0.911363 \ldots,\;\;\; \left(\frac{1077}{p}\right)  =-1, \;p=5,7,11,13,17,19,23, \\
\frac{L(77)}{\sqrt{77}} & =\frac{ M\left( \frac{\sqrt{77}+7}{2}\right)}{\sqrt{77}} = \frac{\sqrt{77}+7}{\sqrt{77}}=0.898862 \ldots,\;\;\; \left(\frac{77}{p}\right)  =-1, \;p=3,5, \\
\frac{L(453)}{\sqrt{453}} & =\frac{ M\left( \frac{\sqrt{453}+15}{6}\right)}{\sqrt{453}} = \frac{19}{\sqrt{453}}=0.892697 \ldots,\;\;\;\left(\frac{453}{p}\right) =-1, \;p=5,7,11,13,17, \\
\frac{L(717)}{\sqrt{717}} & =\frac{ M\left( \frac{\sqrt{717}+21}{6}\right)}{\sqrt{717}} = \frac{\sqrt{717}+21}{\sqrt{717}}=0.892129\ldots,\;\;\;\left(\frac{717}{p}\right)  =-1, \;p=5,7,11,13,17, 19,\\
\frac{L(3053)}{\sqrt{3053}} & =\frac{ M\left( \frac{\sqrt{3053}+41}{14}\right)}{\sqrt{3053}} = \frac{49}{\sqrt{3053}}=0.886814\ldots,\;\;\; \left(\frac{3053}{7}\right)  =1.
\end{align*}
Note other $\alpha$ may achieve the minimum, for example $M\left( \frac{\sqrt{437}+19}{2}\right)  = M\left( \frac{\sqrt{437}+19}{38}\right)$.

\newpage

\vspace{2ex}
\noindent
{\bf Largest values for $d\equiv 2$ mod 4}.
\begin{align*}
\frac{L(398)}{2\sqrt{398}} & =\frac{ M\left( \frac{\sqrt{398}+18}{2}\right)}{2\sqrt{398}}=\frac{\sqrt{398}+ 18}{2\sqrt{398}}=0.951129\ldots,\;\;\; \left(\frac{398}{p}\right) =-1, \;p=3,5,7,11,13,17,19,23,29,31, \\  
\frac{L(38)}{2\sqrt{38}} & =\frac{ M\left( \frac{\sqrt{38}+4}{2}\right)}{2\sqrt{38}}=\frac{11}{2\sqrt{38}}=0.892217\ldots,\;\;\; \left(\frac{38}{p}\right) =-1, \;p=3,5, \\  
\frac{L(62)}{2\sqrt{62}} & =\frac{ M\left( \frac{\sqrt{62}+6}{2}\right)}{2\sqrt{62}}=\frac{\sqrt{62}+ 6}{2\sqrt{62}}=0.881000\ldots,\;\;\; \left(\frac{62}{p}\right) =-1, \;p=3,5,7,11, \\  
\frac{L(318)}{2\sqrt{318}} & =\frac{ M\left( \frac{\sqrt{318}+12}{6}\right)}{2\sqrt{318}}=\frac{\sqrt{318}+ 12}{2\sqrt{318}}=0.836463\ldots,\;\;\; \left(\frac{318}{p}\right) =-1, \;p=5,7,11,13,17,19,23, \\  
\frac{L(14)}{2\sqrt{14}} & =\frac{ M\left( \frac{\sqrt{14}+2}{2}\right)}{2\sqrt{14}}=\frac{\sqrt{14}+ 2}{2\sqrt{14}}=0.767261\ldots,\;\;\; \left(\frac{14}{p}\right) =-1, \;p=3,\\  
\frac{L(138)}{2\sqrt{138}} & =\frac{ M\left( \frac{\sqrt{138}+6}{6}\right)}{2\sqrt{138}}=\frac{\sqrt{138}+ 6}{2\sqrt{138}}=0.755376\ldots,\;\;\; \left(\frac{138}{p}\right) =-1, \;p=5,7,11,13, \\  
\frac{L(22)}{2\sqrt{22}} & =\frac{ M\left( \frac{\sqrt{22}+2}{3}\right)}{2\sqrt{22}}=\frac{\sqrt{22}+ 2}{2\sqrt{22}}=0.713200\ldots,\;\;\; \left(\frac{22}{3}\right) =1. 
\end{align*}

\newpage 

\vspace{2ex}
\noindent
{\bf Largest values for $d\equiv 3$ mod 4}.
\begin{align*}
\frac{L(227)}{2\sqrt{227}} & =\frac{ M\left( \frac{\sqrt{227}+13}{2}\right)}{2\sqrt{227}} = \frac{29}{2\sqrt{227}}=0.962398 \ldots ,\;\;\;\left(\frac{227}{p}\right) =-1, \;p=3,5,7,11,13,17,19,23,  \\
\frac{L(83)}{2\sqrt{83}} & =\frac{ M\left( \frac{\sqrt{83}+7}{2}\right)}{2\sqrt{83}} = \frac{17}{2\sqrt{83}}=0.932966 \ldots,\;\;\;  \left(\frac{83}{p}\right)  =-1, \;p=3,5,7,11,13,\\
\frac{L(167)}{2\sqrt{167}} & =\frac{ M\left( \frac{\sqrt{167}+11}{2}\right)}{2\sqrt{167}} = \frac{\sqrt{167}+11}{2\sqrt{167}}=0.925602 \ldots,\;\;\; \left(\frac{167}{p}\right)  =-1, \;p=3,5,7,11,13,17,19,  \\
\frac{L(447)}{2\sqrt{447}} & =\frac{ M\left( \frac{\sqrt{447}+15}{6}\right)}{2\sqrt{447}} = \frac{37}{2\sqrt{447}}=0.875019 \ldots ,\;\;\;\left(\frac{447}{p}\right) =-1, \;p=5,7,11,13,17,  \\
\frac{L(47)}{2\sqrt{47}} & =\frac{ M\left( \frac{\sqrt{47}+5}{2}\right)}{2\sqrt{47}} = \frac{\sqrt{47}+5}{2\sqrt{47}}=0.864662 \ldots ,\;\;\;\left(\frac{47}{p}\right) =-1, \;p=3,5,7,  \\
\frac{L(635)}{2\sqrt{635}} & =\frac{ M\left( \frac{\sqrt{635}+15}{10}\right)}{2\sqrt{635}} = \frac{41}{2\sqrt{635}}=0.813517 \ldots ,\;\;\;\left(\frac{635}{p}\right) =-1, \;p=3,7,11,13,17,19,23,29,31,37,  \\
\frac{L(23)}{2\sqrt{23}} & =\frac{ M\left( \frac{\sqrt{23}+3}{2}\right)}{2\sqrt{23}} = \frac{\sqrt{23}+3}{2\sqrt{23}}=0.812771\ldots ,\;\;\;\left(\frac{23}{p}\right) =-1, \;p=3,5,  \\
\frac{L(3)}{2\sqrt{3}} & =\frac{ M\left( \frac{\sqrt{3}+1}{2}\right)}{2\sqrt{3}} = \frac{\sqrt{3}+1}{2\sqrt{3}}=0.788675 \ldots ,\;\;\;\left(\frac{3}{p}\right) =-1, \;p=3,7,11,13,17,19,23,29,31,37,  \\
\frac{L(827)}{2\sqrt{827}} & =\frac{ M\left( \frac{\sqrt{827}+15}{14}\right)}{2\sqrt{827}} = \frac{\sqrt{827}+15}{2\sqrt{827}}=0.760800 \ldots ,\;\;\;\left(\frac{827}{7}\right) =1.  \\
\end{align*}

\end{document}